\newtheorem{theorem}{Theorem}[section] %to numerate individually on each section
\newtheorem{cor}[theorem]{Corollary}
\newtheorem{definition}[theorem]{Definition}
\newtheorem{ass}[theorem]{Assumptions}
\newtheorem{example}[theorem]{Example}
\newtheorem{lemma}[theorem]{Lemma}
\newtheorem{prop}[theorem]{Proposition}
\newtheorem{remark}[theorem]{Remark}
\newtheorem*{ack}{Acknowledgements}
\newcommand{\mh}{\mbox{MHM}}
\newcommand{\Ku}{\mbox{Kue}}
\newcommand{\ms}{\mbox{mHs}}
\newcommand{\ve}{\mbox{vect}}
\newcommand{\LL}{{\mathcal L}}
\newcommand{\OO}{{\mathcal O}}
\newcommand{\VV}{{\mathcal V}}
\newcommand{\FC}{{\mathcal F}}
\newcommand{\MC}{{\mathcal M}}
\newcommand{\PC}{{\mathcal P}}
\newcommand{\Z}{\mathbb{Z}}
\newcommand{\N}{\mathbb{N}}
\newcommand{\Q}{\mathbb{Q}}
\newcommand{\C}{\mathbb{C}}
\newcommand{\K}{\mathbb{K}}
\begin{document}

\title[Twisted genera of symmetric products]{Twisted genera of symmetric products}

\author[L. Maxim ]{Laurentiu Maxim}
\address{L.  Maxim: Department of Mathematics,  University of Wisconsin-Madison, 480 Lincoln Drive, Madison, WI 53706, USA.}
\email {maxim@math.wisc.edu}

\author[J. Sch\"urmann ]{J\"org Sch\"urmann}
\address{J.  Sch\"urmann : Mathematische Institut,
          Universit\"at M\"unster,
          Einsteinstr. 62, 48149 M\"unster,
          Germany.}
\email {jschuerm@math.uni-muenster.de}

\subjclass[2000]{Primary 55S15, 20C30, 32S35, 32S60, 19L20; Secondary 14C30. }

\keywords{symmetric product, exterior product, symmetric monoidal category, generating series, genus, Hodge numbers, lambda-ring, Adams operation}

\thanks{The first author partially supported by NSF-1005338. The second author partially supported by the SFB 878 ``groups, geometry and actions".}
\date{\today}

\begin{abstract} We give a new proof of formulae for the generating series of (Hodge) genera
of symmetric products $X^{(n)}$ 
 with coefficients, which hold for complex quasi-projective varieties $X$ with any kind of singularities, and which include many of the classical results in the literature as special cases. Important specializations of our results include generating series for extensions of
Hodge numbers and Hirzebruch's $\chi_y$-genus to the singular setting and, in particular, generating series for Intersection cohomology Hodge numbers and Goresky-MacPherson Intersection cohomology signatures of symmetric products
of complex projective varieties. Our proof applies to more general situatons
and is based on equivariant K\"{u}nneth formulae and pre-lambda structures on the coefficient theory of a point, $\bar{K}_0(A(pt))$, with $A(pt)$ a Karoubian $\Q$-linear tensor category. Moreover, Atiyah's approach to power operations in $K$-theory also applies in this context
to  $\bar{K}_0(A(pt))$, giving  a nice description of the important related Adams
operations. This last approach also allows us to introduce  
 very interesting coefficients on the symmetric products $X^{(n)}$.
\end{abstract}

\maketitle

\tableofcontents

%\section{Introduction}\label{intro}

\section{Introduction}

Some of the most interesting examples of global orbifolds are the {\it symmetric products}  $X^{(n)}$, $n \geq 0$,  of a smooth complex algebraic variety $X$. The $n$-fold symmetric product of $X$ is defined as $X^{(n)}:=X^n/{\Sigma_n}$, i.e.,  the quotient of the product of $n$ copies of $X$ by the natural action of the symmetric group  on $n$ elements, $\Sigma_n$. If $X$ is a smooth projective curve,  symmetric products are of fundamental importance in the study of the Jacobian variety of $X$ and other aspects of its geometry, e.g., see \cite{Mac1}. If $X$ is a smooth algebraic surface, $X^{(n)}$ is used to understand the topology of the $n$-th Hilbert scheme $X^{[n]}$ parametrizing  closed zero-dimensional subschemes of length $n$ of $X$, e.g., see \cite{Che, GS, GLM0}, and also \cite{Che, GLM1} for higher-dimensional generalizations.  For the purpose of this note we shall assume that $X$ is a (possibly singular) complex quasi-projective variety, therefore its symmetric products are algebraic varieties as well.\\

A {\it generating series} for a given invariant $\mathcal{I}(-)$ of symmetric products of complex algebraic varieties  is an expression of the form 
$$\sum_{n \geq 0} \mathcal{I}( X^{(n)} ) \cdot t^n,$$ 
provided  $ \mathcal{I}( X^{(n)} ) $ is defined for all $n$. The aim is to calculate such an expression solely in terms of invariants of $X$. Then the corresponding invariant of the $n$-th symmetric product $X^{(n)}$ is equal to the coefficient of $t^n$ in the resulting expression in invariants of $X$.\\

For example, there is a well-known formula due to Macdonald \cite{Mac} for the generating series of the {\em Betti numbers} $b_k(X):=dim(H^k(X,\Q))$,
Poincar\'{e} polynomial $P(X):= \sum_{k \geq 0}  b_k(X)\cdot (-z)^k$, and topological {\em Euler characteristic}
$\chi(X):=\sum_{k \geq 0} (-1)^k\cdot b_k(X)=P(X)(1)$ of a compact triangulated space $X$:
\begin{equation} \label{Macd-betti} \begin{split}
\sum_{n \geq 0} \left(\:\sum_{k \geq 0}  b_k(X^{(n)})\cdot (-z)^k \right) \cdot t^n &=
\prod_{k \geq 0} \left(\frac{1}{1-z^kt}\right)^{(-1)^k\cdot b_k(X)} \\
& = \exp \left( \sum_{r \geq 1} P(X)(z^r)  \cdot \frac{t^r}{r} \right) 
\end{split}
\end{equation}
and, respectively,
\begin{equation} \label{Macd-euler}
\sum_{n \geq 0}  \chi(X^{(n)})  \cdot t^n = (1-t)^{-\chi(X)} = 
\exp \left( \sum_{r \geq 1}  \chi(X) \cdot \frac{t^r}{r} \right) \:.
\end{equation}
For the last equalities in (\ref{Macd-betti}) and (\ref{Macd-euler}), recall that $-\log(1-t)=\sum_{r \geq 1} \frac{t^r}{r}$.\\

A Chern class version of formula (\ref{Macd-euler}) was obtained by Ohmoto in \cite{Oh} for the Chern-MacPherson classes of \cite{MP}. Moonen \cite{M} obtained generating series for the {\em arithmetic genus} $\chi_a(X):=\sum_{k \geq 0} (-1)^k\cdot dim H^k(X,\OO_X)$
of symmetric products of a complex projective variety:
\begin{equation} \label{Moon-euler}
\sum_{n \geq 0}  \chi_a(X^{(n)})  \cdot t^n = (1-t)^{-\chi_a(X)} = 
\exp \left( \sum_{r \geq 1}  \chi_a(X) \cdot \frac{t^r}{r} \right) 
\end{equation}
and, more generally, for the Baum-Fulton-MacPherson homology Todd classes (cf. \cite{BFM}) of symmetric products of  projective varieties. In \cite{Za}, Zagier obtained such generating series for the {\em signature} $\sigma(X)$ and, resp.,  $L$-classes of symmetric products of compact triangulated  (rational homology) manifolds, e.g., for a  complex projective homology manifold $X$ of  pure even complex dimension he showed that
\begin{equation}\label{Za-sig}
\sum_{n \geq 0} {\sigma}(X^{(n)}) \cdot t^n=\frac{(1+t)^{\frac{\sigma(X)-\chi(X)}{2}}}{(1-t)^{\frac{\sigma(X)+\chi(X)}{2}}}.
\end{equation}

Borisov and Libgober obtained in \cite{BL3} generating series for the Hirzebruch $\chi_y$-genus  and, more generally, for elliptic genera of symmetric products of smooth compact varieties (cf. also \cite{Zh} for other calculations involving the Hirzebruch $\chi_y$-genus). Generating series for the
{\em Hodge numbers} $h_{(c)}^{p,q,k}(X):=h^{p,q}(H_{(c)}^k(X,\Q))$ and, resp., the {\em $E$-polynomial}
$$e_{(c)}(X):=\sum_{p,q \geq 0} e_{(c)}^{p,q}(X)\cdot y^px^q, \quad \text{with} \quad e_{(c)}^{p,q}(X):=
\sum_{k \geq 0} (-1)^k\cdot h_{(c)}^{p,q,k}(X),$$ 
of the cohomology (with compact support) of a quasi-projective variety $X$ 
(endowed with Deligne's mixed Hodge structure \cite{De}) are considered by Cheah in \cite{Che}: 
\begin{equation} \label{Che-betti}
\sum_{n \geq 0} \left(\:\sum_{p,q,k \geq 0} h_{(c)}^{p,q,k}(X^{(n)})\cdot y^px^q(-z)^k \right) \cdot t^n =
\prod_{p,q,k \geq 0} \left(\frac{1}{1-y^px^qz^kt}\right)^{(-1)^k\cdot h_{(c)}^{p,q,k}(X)} 
\end{equation}
and, respectively,
\begin{equation} \label{Che-e} 
\sum_{n \geq 0} e_{(c)}(X^{(n)})  \cdot t^n = \prod_{p,q\geq 0} 
\left(\frac{1}{1-y^px^qt}\right)^{ e_{(c)}^{p,q}(X)} 
= \exp \left( \sum_{r \geq 1}  e_{(c)}(X)(y^r,x^r) \cdot \frac{t^r}{r} \right) \:.
 \end{equation}
Note that for $X$ a quasi-projective variety one has 
$$b_k(X)=\sum_{p,q} h^{p,q,k}(X) \quad \text{ and} \quad \chi(X)=e_{(c)}(X)(1,1)\:,$$ 
so that one gets back (\ref{Macd-betti}) and (\ref{Macd-euler}) above 
by substituting $(y,x)=(1,1)$ in (\ref{Che-betti}) and (\ref{Che-e}), respectively. Similarly, using the relation 
$$\chi_{-y}(X)=e(X)(y,1)=:\sum_{p\geq 0} f^p\cdot y^p$$ 
for a projective manifold $X$, one gets in this case for the {\em Hirzebruch $\chi_y$-genus} the identities:
\begin{equation} \label{chi-y-smooth} 
\sum_{n \geq 0} \chi_{-y}(X^{(n)})  \cdot t^n = \prod_{p\geq 0} 
\left(\frac{1}{1-y^pt}\right)^{f^p(X)} 
= \exp \left( \sum_{r \geq 1}  \chi_{-y^r}(X) \cdot \frac{t^r}{r} \right) \:.
 \end{equation}
Note that for $X$ a complex projective manifold one also has 
$$\chi_a(X)=\chi_0(X)=e(X)(0,1) \quad \text{and} \quad \sigma(X)=\chi_1(X)=e(X)(-1,1)\:,$$ so that one gets back (\ref{Moon-euler}) and
(\ref{Za-sig}) for this case by letting $(y,x)=(0,1)$ and $(y,x)=(-1,1)$, respectively.\\

Lastly, Ganter \cite{Ga} gave conceptual interpretations of generating series formulae in homotopy theoretic settings, and Getzler formulated generating series results already in the general context of suitable ``K\"{u}nneth functors" with values in a ``pseudo-abelian $\Q$-linear tensor category" (which in \cite{Ge1} is called a ``Karoubian rring"), see  \cite{Ge1}[Prop.(5.4)].\\

The purpose of this note is to prove a very general generating series formula for such genera ``with coefficients", which holds for complex quasi-projective varieties with any kind of singularities, and which includes many of the above mentioned results as special cases. Important specializations of our result include, among others, generating series for extensions of 
{\em Hodge numbers and Hirzebruch's $\chi_y$-genus} to the singular setting with coefficients
$$\MC \in D^b\mh(X),$$
a complex of Saito's (algebraic) mixed Hodge modules (\cite{Sa,Sa0,Sa1}), and, in particular, generating series for {\em intersection cohomology Hodge numbers} and Goresky-MacPherson {\em intersection cohomology signatures}
 (\cite{GM1}) of complex projective varieties. A more direct proof of these results in the context of complexes of mixed Hodge modules has been recently given 
in \cite{MSS}. Here we supply a new (more abstract) proof of these results, relying on the theory of pre-lambda rings (e.g., see \cite{Kn}), which has the merit that it also applies to more general situations, as we shall explain later on.

Note that mixed Hodge module coefficients are already used in \cite{Ge2}, but with other techniques and applications in mind.
Besides the use of very general coefficients, our approach is close to \cite{Ge1}.
The use of coefficients not only gives more general results, but is also needed
for a functorial
characteristic class version of some of our results in terms of the homology Hirzebruch classes of Brasselet-Sch\"urmann-Yokura \cite{BSY}, which 
are treated in our recent paper \cite{CMSSY}.
This corresponding characteristic class version from \cite{CMSSY} unifies the mentioned results of
\cite{Oh, M, Za} for Chern-, Todd- and $L$-classes.\\

In this paper, the functorial viewpoint is only used in three important special cases:
\begin{enumerate}
\item[(a)] By taking the exterior product $\MC^{\boxtimes n} $ we get an 
object on the cartesian product $X^n$, which by \cite{MSS} is equivariant with respect to a corresponding  permutation action of the symmetric group $\Sigma_n$ on $X^n$.
\item[(b)] We push down the exterior product $\MC^{\boxtimes n} $  by the projection
$p_n : X^n\to X^{(n)}$ onto the $n$-th symmetric product $X^{(n)}$ to get a $\Sigma_n$-equivariant
object on $X^{(n)}$, from which one can take by \cite{BS} the $\Sigma_n$-invariant sub-object 
\begin{equation} \label{sym-proj}
\MC^{(n)}:=\PC^{sym}({p_n}_*\MC^{\boxtimes{n}})= ({p_n}_*\MC^{\boxtimes{n}})^{\Sigma_n} 
\in D^b\mh(X^{(n)})
\end{equation}
defined by the projector 
$\PC^{sym}= \frac{1}{n!}\sum_{\sigma \in \Sigma_n} \psi_{\sigma} =: (-)^{\Sigma_n}$, where the isomorphisms $\psi_{\sigma}: {p_n}_*\MC^{\boxtimes{n}}\to {p_n}_*\MC^{\boxtimes{n}}$
for $\sigma \in \Sigma_n$ are given by the action of $\Sigma_n$ on ${p_n}_*\MC^{\boxtimes{n}}$ (compare with the appendix).
Here we use the fact that $\Sigma_n$ acts trivially on $X^{(n)}$.
\item[(c)] We push down by the constant map $k$ to a point space $pt$ for calculating the invariants we are interested in:
$$k_*\MC^{(n)}, k_!\MC^{(n)}\in D^b\mh(pt) \simeq D^b(\ms^p) \:,$$
identifying by Saito's theory the abelian category $\mh(pt)$ of mixed Hodge modules on a point with the  abelian category $\ms^p$ of {\em graded polarizable $\Q$-mixed Hodge structures},
with $H^*(pt,k_{*(!)}\MC^{(n)})= H^*_{(c)}(X^{(n)},\MC^{(n)})$.
\end{enumerate}

And these are related by a  K\"{u}nneth formula (compare with Remark \ref{rem-kue}, and also with \cite{MSS}[Thm.1])
\begin{equation} \label{Kue-mHs} 
H^*_{(c)}(X^{(n)},\MC^{(n)})
\simeq (H^*_{(c)}(X^n,\MC^{\boxtimes n}))^{\Sigma_n} \simeq 
((H^*_{(c)}(X,\MC))^{\otimes{n}})^{\Sigma_n} 
\end{equation}
for the cohomology (with compact support). These are isomorphisms of graded groups of mixed Hodge structures. 

For suitable choices of our coefficients $\MC$, the ``symmetric power" $\MC^{(n)}$ becomes a highly interesting object on the symmetric product $X^{(n)}$. In this paper we are only interested in genera of $\MC^{(n)}$, i.e., invariants defined in terms of the mixed Hodge structure on 
$H_{(c)}^*(X^{(n)};\MC^{(n)})$. But for the characteristic class version from \cite{CMSSY} it is important to work directly on the singular space $X^{(n)}$.\\

The isomorphism (\ref{Kue-mHs}) of graded groups of mixed Hodge structures is the key ingredient for proving the following result (compare also with \cite{MSS}[Cor.2]):
\begin{theorem}\label{main1} Let $X$ be a complex quasi-projective variety and $\MC \in D^b\mh(X)$ a bounded complex of mixed Hodge modules on $X$. For $p,q,k\in \Z$, denote by
$$h_{(c)}^{p,q,k}(X,\MC):=h^{p,q}(H_{(c)}^k(X,\MC)):=
dim_{\C}(Gr^p_FGr^W_{p+q}H_{(c)}^k(X,\MC))$$  
the Hodge numbers, with generating polynomial 
$$h_{(c)}(X,\MC):=\sum_{p,q,k} h_{(c)}^{p,q,k}(X,\MC)\cdot y^px^q(-z)^k  \in \Z[y^{\pm 1},x^{\pm 1},z^{\pm 1}]\:.$$
Note that for $z=1$ this yields the $E$-polynomial
$$e_{(c)}(X,\MC):=\sum_{p,q} e_{(c)}^{p,q}(X,\MC)\cdot y^px^q \in \Z[y^{\pm 1},x^{\pm 1}] $$ 
 of the cohomology (with compact support) $H_{(c)}^*(X,\MC)$ of $\MC$,
with $$e_{(c)}^{p,q}(X,\MC):=\sum_{k \geq 0} (-1)^k\cdot h_{(c)}^{p,q,k}(X,\MC)\:.$$
For 
$$\MC^{(n)}:=({p_n}_*\MC^{\boxtimes{n}})^{\Sigma_n} \in D^b\mh(X^{(n)})$$ the $n$-th symmetric product of $\MC$,
the following generating series formulae hold:
\begin{equation} \label{gen-MHM1} 
\begin{split}
 \sum_{n \geq 0} \left(\:\sum_{p,q,k} h_{(c)}^{p,q,k}(X^{(n)},\MC^{(n)})\cdot y^px^q(-z)^k \right) \cdot t^n  
 &=\prod_{p,q,k} \left(\frac{1}{1-y^px^qz^kt}\right)^{(-1)^k\cdot h_{(c)}^{p,q,k}(X,\MC)} \\
&=\exp \left( \sum_{r \geq 1}  h_{(c)}(X,\MC)(y^r,x^r,z^r) \cdot \frac{t^r}{r} \right)
\end{split}
\end{equation}
and
\begin{equation} \label{gen-MHM2} \begin{split}
\sum_{n \geq 0} e_{(c)}(X^{(n)},\MC^{(n)})  \cdot t^n &= \prod_{p,q} 
\left(\frac{1}{1-y^px^qt}\right)^{ e_{(c)}^{p,q}(X,\MC)} \\
& = \exp \left( \sum_{r \geq 1}  e_{(c)}(X,\MC)(y^r,x^r) \cdot \frac{t^r}{r} \right) \:.
\end{split} \end{equation}
\end{theorem}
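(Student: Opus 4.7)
My plan is to reduce the theorem to a calculation of Hodge numbers of super-symmetric powers of a graded mixed Hodge structure, and then expand the generating series directly. To begin, I would invoke the K\"unneth isomorphism (\ref{Kue-mHs}), which identifies
$$H^*_{(c)}(X^{(n)},\MC^{(n)}) \;\cong\; \bigl((H^*_{(c)}(X,\MC))^{\otimes n}\bigr)^{\Sigma_n}$$
as graded polarizable mixed Hodge structures. The decisive point is that, because $V:=H^*_{(c)}(X,\MC)$ is $\Z$-graded by cohomological degree, the natural $\Sigma_n$-action on $V^{\otimes n}$ carries Koszul signs, so the invariants are the \emph{super}-symmetric powers of $V$; this sign behaviour is precisely what the categorical framework of \cite{MSS} and the appendix provide.

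Next I would pass to the associated graded with respect to the Hodge and weight filtrations and write
$$V \;\cong\; \bigoplus_{p,q,k} V^{p,q,k},\qquad \dim_{\C} V^{p,q,k}=h_{(c)}^{p,q,k}(X,\MC),$$
where each $V^{p,q,k}$ is pure of cohomological degree $k$ and Hodge bidegree $(p,q)$. Since super-symmetrization is compatible with direct sums in the graded sense, the total super-symmetric algebra decomposes as
$$\bigoplus_{n\ge 0} (V^{\otimes n})^{\Sigma_n}\;\cong\; \bigotimes_{p,q,k} S_k(V^{p,q,k}),$$
with $S_k=\mathrm{Sym}$ when $k$ is even and $S_k=\Lambda$ when $k$ is odd.

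Then I would multiply the generating series for the individual factors, tracking the Hodge bidegree by $y^p x^q$ and the cohomological degree by $(-z)^k$. An element of $\mathrm{Sym}^n(V^{p,q,k})$ (for even $k$) lies in tridegree $(np,nq,nk)$ and contributes $(y^p x^q z^k t)^n$, while an element of $\Lambda^n(V^{p,q,k})$ (for odd $k$) contributes $(-y^p x^q z^k t)^n$ because $(-z)^{nk}=(-1)^n z^{nk}$. A short calculation shows that both cases are captured by the single factor
$$\left(\frac{1}{1-y^p x^q z^k t}\right)^{(-1)^k\, h_{(c)}^{p,q,k}(X,\MC)}.$$
Taking the product over all $(p,q,k)$ yields the product form of (\ref{gen-MHM1}); the exponential form follows by taking logarithms, using $-\log(1-u)=\sum_{r\ge 1}u^r/r$, and interchanging the order of summation. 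Formula (\ref{gen-MHM2}) is then the specialization $z=1$, in which the even/odd distinction collapses to the alternating sum intrinsic to $e_{(c)}^{p,q}$.

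The main obstacle is the Koszul-sign step: justifying that the $\Sigma_n$-action inherited from the derived K\"unneth isomorphism really does introduce super-symmetric permutation signs on cohomology, so that the invariants split as $\mathrm{Sym}$ on even-degree pieces and $\Lambda$ on odd-degree pieces. Once that categorical input from \cite{MSS} (or the appendix) is in hand, the remainder is a routine manipulation of generating series for $\mathrm{Sym}$ and $\Lambda$.
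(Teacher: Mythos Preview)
Your argument is correct and is essentially the ``direct'' proof that the paper attributes to \cite{MSS}. The present paper, however, deliberately takes a more abstract route: it repackages the K\"unneth isomorphism (\ref{Kue-mHs}) as the identity $\sigma_t([k_*\MC])=\sum_{n\ge 0}[k_*\MC^{(n)}]\,t^n$ in the pre-lambda ring $\bar{K}_0(D^b\mh(pt))$, invokes the universal relation $\sigma_t(a)=\exp\bigl(\sum_{r\ge 1}\Psi_r(a)\,t^r/r\bigr)$ (Theorem~\ref{adams-thm}), and then transports this through the pre-lambda ring homomorphism $h:\bar{K}_0(D^b\mh(pt))\to\Z[y^{\pm1},x^{\pm1},z^{\pm1}]$ built in Proposition~\ref{prop:h}. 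Your even/odd case analysis on $k$ surfaces in the paper only inside the proof of Proposition~\ref{prop:h}, where it checks that $h$ respects $\sigma_t$ on one-dimensional generators $L^{p,q,k}$. What your approach buys is brevity and concreteness for this particular statement; what the paper's approach buys is a uniform template that yields Theorem~\ref{main2}, Corollary~\ref{MHM-alt}, and the characteristic-class results of \cite{CMSSY} by merely swapping the pseudo-functor $A(-)$ or the target pre-lambda ring, without rerunning the generating-series computation.
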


While the proof of Theorem \ref{main1} given in \cite{MSS} uses the K\"{u}nneth formula (\ref{Kue-mHs}) in a more direct way, 
our proof here will use an interpretation of (\ref{Kue-mHs}) in terms of pre-lambda rings. This more abstract point of view has the advantage
of being applicable to other situations as well (as discussed later on).\\ 

The definition of the $E$-polynomial and Hodge numbers $h_{(c)}^{p,q,k}(X,\MC)$
uses both the Hodge and the weight filtration of the mixed Hodge structure of $H_{(c)}^k(X,\MC)$,
and it is known that these invariants can't be generalized to suitable characteristic classes (see \cite{BSY}).
For characteristic class versions one has to work only with the Hodge filtration $F$ and  the corresponding
$\chi_y$-genus in $\Z[y^{\pm 1}]$:
\begin{equation} \chi^{(c)}_{-y}(X,\MC):=\sum_{p} f^p_{(c)}\cdot y^p,
\quad \text{with} \quad 
f^p_{(c)} :=\sum_{i} (-1)^i {\rm dim}_{\C} {\rm Gr}^p_F H^i_{(c)}(X,\MC)\:.
\end{equation}
Then $f^p_{(c)}=\sum_{q} e^{p,q}_{(c)}$ and  
$\chi^{(c)}_{-y}(X,\MC)=e_{(c)}(X,\MC)(y,1)$, so Theorem \ref{main1}
implies the following (compare also with \cite{MSS}[Cor.3])
\begin{cor} \label{cor-main1}
Let $X$ be a complex quasi-projective variety and $\MC \in D^b\mh(X)$ a bounded complex of mixed Hodge modules on $X$. With the above notations, the following formula holds:
\begin{equation}\label{eq:cor-main1} \begin{split}
\sum_{n \geq 0} \chi^{(c)}_{-y}(X^{(n)},\MC^{(n)})  \cdot t^n &= \prod_{p} 
\left(\frac{1}{1-y^pt}\right)^{ f_{(c)}^{p}(X,\MC)} \\
& = \exp \left( \sum_{r \geq 1}  \chi^{(c)}_{-y^r}(X,\MC) \cdot \frac{t^r}{r} \right) \:.
\end{split} \end{equation}
%$\hfill{\square}$
\end{cor}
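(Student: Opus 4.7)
The plan is to derive Corollary \ref{cor-main1} as an immediate specialization of equation (\ref{gen-MHM2}) from Theorem \ref{main1} by substituting $x = 1$. The two key identities that make this work are explicitly recorded in the excerpt: $\chi^{(c)}_{-y}(X,\MC) = e_{(c)}(X,\MC)(y,1)$ and $f_{(c)}^p = \sum_{q} e_{(c)}^{p,q}$. Both sides of (\ref{gen-MHM2}) are elements of the formal power series ring over $\Z[y^{\pm 1}, x^{\pm 1}]$, so specializing $x=1$ is a legitimate operation and commutes with each of the three expressions.

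First I would substitute $x=1$ into the leftmost term of (\ref{gen-MHM2}) applied to $X^{(n)}$ and $\MC^{(n)}$: the coefficient of $t^n$ becomes $e_{(c)}(X^{(n)},\MC^{(n)})(y,1) = \chi^{(c)}_{-y}(X^{(n)},\MC^{(n)})$, yielding the desired generating series on the left of (\ref{eq:cor-main1}). Next, I would process the infinite product by grouping together the factors indexed by a fixed $p$:
\[
\prod_{p,q} \left(\frac{1}{1-y^p t}\right)^{e_{(c)}^{p,q}(X,\MC)}
= \prod_{p} \left(\frac{1}{1-y^p t}\right)^{\sum_{q} e_{(c)}^{p,q}(X,\MC)}
= \prod_{p} \left(\frac{1}{1-y^p t}\right)^{f_{(c)}^{p}(X,\MC)},
\]
which recovers the middle expression of (\ref{eq:cor-main1}). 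Finally, substituting $x=1$ in the exponential form of (\ref{gen-MHM2}) replaces $e_{(c)}(X,\MC)(y^r,x^r)$ by $e_{(c)}(X,\MC)(y^r,1) = \chi^{(c)}_{-y^r}(X,\MC)$, producing the third expression.

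There is no real obstacle: the content of the corollary is entirely inherited from Theorem \ref{main1}, and the only step requiring any attention is to verify that grouping the product over $q$ is well defined. Since for each fixed $p$ only finitely many $q$ contribute nonzero $e_{(c)}^{p,q}(X,\MC)$ (the mixed Hodge structures involved being finite-dimensional), the sum $\sum_q e_{(c)}^{p,q}(X,\MC) = f_{(c)}^p(X,\MC)$ is finite, so the exponent is a well-defined integer and no convergence issue arises in the formal power series ring in $t$.
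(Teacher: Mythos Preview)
Your proposal is correct and follows exactly the paper's own approach: the paper simply states that $f^p_{(c)}=\sum_{q} e^{p,q}_{(c)}$ and $\chi^{(c)}_{-y}(X,\MC)=e_{(c)}(X,\MC)(y,1)$, and then says that Theorem~\ref{main1} (specifically (\ref{gen-MHM2})) implies the corollary. Your write-up is just a slightly more explicit version of this one-line specialization at $x=1$.
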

This is in fact the formula which has been recently  generalized to a characteristic class version in \cite{CMSSY}[Thm.1.1].\\

For the constant Hodge module (complex) $\MC=\Q_X^H$ we get back (\ref{Che-betti}) and
(\ref{Che-e}) since, as shown in \cite{MSS}[Rem.2.4(i)], one has 
\begin{equation}\label{qp} \left(\Q_X^H\right)^{(n)}=\Q^H_{X^{(n)}}\:, \end{equation}
and Deligne's and Saito's mixed Hodge structure on $H^*_{(c)}(X,\Q)$ agree (\cite{Sa3}).
Here we only need this deep result for $X$ quasi-projective, where it quickly follows from the construction.\\ 

But of course we can use other coefficients, e.g., for $X$ pure dimensional we can use
a corresponding (shifted) intersection cohomology  mixed Hodge module 
$${IC'}_X^H= {IC}_X^H[-{\rm dim}_{\C}X] \in \mh(X)[-{\rm dim}_{\C}X]\subset D^b\mh(X)$$
calculating the intersection (co)homology $IH^*_{(c)}(X)=H^*_{(c)}(X,{IC'}_X)$ of $X$. 
Then we have similarly that (cf. \cite{MSS}[Rem.2.4(ii)]) 
\begin{equation}\label{ihp} \left({IC'}_X^H\right)^{(n)}={IC'}^H_{X^{(n)}}\:.\end{equation}

More generally, if $\LL$ is an admissible (graded polarizable) variation of mixed Hodge structures with quasi-unipotent monodromy at infinity defined on a smooth pure dimensional quasi-projective variety 
$U$, then $\LL$ corresponds by Saito's work to a shifted mixed Hodge module
$$\LL^H \in \mh(U)[-{\rm dim}_{\C}U]\subset D^b\mh(U)\:.$$
Note that the projection $p_n: U^n\to U^{(n)}$ is a finite ramified covering branched
along the ``fat diagonal'', i.e., the induced map of the configuration spaces on $n$ (un)ordered points in $U$:
$$p_n: F(U,n):=\{ (x_1, x_2, \dots, x_n) \in U^n \  \vert \ x_i \neq x_j \ \ {\rm for } \ \ i \neq j \} \to F(U,n)/\Sigma_n=:B(U,n)$$
is a finite unramified covering. Therefore $(\LL^H)^{(n)}$ is also a shifted mixed Hodge module
with $(\LL^H)^{(n)}|_{B(U,n)}$ corresponding to an admissible variation of mixed Hodge structures
on $B(U,n)$ (as before). For $U\subset X$ a Zariski open dense subset of a quasi-projective variety $X$, one can extend $\LL^H$ and $(\LL^H)^{(n)}|_{B(U,n)}$ uniquely to twisted intersection cohomology mixed Hodge modules
$${IC}_X^H(\LL) \in \mh(X) \quad \text{and} \quad {IC}_{X^{(n)}}^H(\LL^{(n)}) \in \mh(X^{(n)})\:.$$
As before, the shifted complexes 
$${IC'}_X^H(\LL):={IC}_X^H(\LL)[-{\rm dim}_{\C}X] \quad \text{and} \quad
{IC'}_{X^{(n)}}^H(\LL^{(n)}):={IC}_{X^{(n)}}^H(\LL^{(n)})[-n\cdot {\rm dim}_{\C}X] $$
calculate the corresponding twisted intersection (co)homology. And these are related by (cf. \cite{MSS}[Rem.2.4(ii)])
\begin{equation}\label{ihp-twist} \left({IC'}_X^H(\LL)\right)^{(n)}={IC'}^H_{X^{(n)}}(\LL^{(n)}) \:.\end{equation}

Thus we get all the generating series formulae also for these (twisted) intersection (co)homology invariants, and in particular for  
\begin{equation}\label{IC-inv}
{I\chi_{-y}^{(c)}}(X,\LL):={\chi_{-y}^{(c)}}(X,{IC'}_X^H(\LL)) \:.
\end{equation}
And this polynomial unifies the following invariants:
\begin{enumerate}
\item[(y=1)] 
${I\chi_{-1}^{(c)}}(X,\LL)={\chi^{IH}_{(c)}}(X,\LL)$ is the corresponding  intersection (co)homology Euler characteristic (with compact support).
\item[(y=0)] 
$I\chi_{0}^{(c)}(X,\LL)={I\chi_a^{(c)}}(X,\LL)$ is the corresponding intersection (co)homology  arithmetic genus (with compact support).
\item[(y=-1)] Assume finally that $X$ is projective of pure even complex dimension $2m$, with
$\LL$ a polarizable variation of pure Hodge structures with quasi-unipotent monodromy at infinity defined on a smooth Zariski open dense subset
$U\subset X$. If $\LL$ is of even weight, then the middle intersection (co)homology group
$$IH^m(X,\LL)=H^m(X,{IC'}_X(\LL))$$
gets an induced {\em symmetric} (non-degenerate) intersection form, with
$$I\chi_{1}(X,\LL)=\sigma(X,\LL):=\sigma\left(IH^m(X,\LL)\right)$$ the corresponding 
Goresky-MacPherson (twisted intersection homology) signature (\cite{GM1}).
This identification follows from Saito's {\em Hodge index theorem} for $IH^*(X,\LL)$ (\cite{Sa1}[Thm.5.3.1]), but see also \cite{MSS}[Sec.3.6]. 
\end{enumerate}

Here we only formulate the following  
\begin{cor}
Let $X$ be a pure dimensional complex quasi-projective variety, with
$\LL$ an admissible (graded polarizable) variation of mixed Hodge structures with quasi-unipotent monodromy at infinity defined on a smooth Zariski open dense subset of $X$.
Then
\begin{gather}\label{IC-genera} 
\sum_{n \geq 0} {I\chi^{(c)}_{-y}}(X^{(n)},\LL^{(n)})\cdot t^n 
= \exp \left( \sum_{r \geq 1}  {I\chi^{(c)}_{-y^r}}(X,\LL) \cdot \frac{t^r}{r} \right) \:.\\
\sum_{n \geq 0} {\chi^{IH}_{(c)}}(X^{(n)},\LL^{(n)})\cdot t^n 
= \exp \left( \sum_{r \geq 1} {\chi^{IH}_{(c)}}(X,\LL) \cdot \frac{t^r}{r} \right) = (1-t)^{-{\chi^{IH}_{(c)}}(X,\LL)} \:.\\
\sum_{n \geq 0} {I\chi_a^{(c)}}(X^{(n)},\LL^{(n)})\cdot t^n 
= \exp \left( \sum_{r \geq 1} {I\chi_a^{(c)}}(X,\LL) \cdot \frac{t^r}{r} \right) = (1-t)^{- {I\chi_a^{(c)}}(X,\LL)} \:.
\end{gather}
Assume in addition that $X$ is projective of even complex dimension,
with $\LL$ a polarizable variation of pure Hodge structures of even weight. Then 
\begin{equation}\label{IC-sig}
\sum_{n \geq 0} {\sigma}(X^{(n)},\LL^{(n)}) \cdot t^n=\frac{(1+t)^{\frac{\sigma(X,\LL)-\chi^{IH}(X,\LL)}{2}}}
{(1-t)^{\frac{\sigma(X,\LL)+\chi^{IH}(X,\LL)}{2}}}\:. 
\end{equation}
%$\hfill{\square}$
\end{cor}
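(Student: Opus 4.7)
The proof is by direct specialization of Corollary \ref{cor-main1} to the coefficient complex $\MC := {IC'}_X^H(\LL) \in D^b\mh(X)$. By the compatibility (\ref{ihp-twist}), $\MC^{(n)} = {IC'}^H_{X^{(n)}}(\LL^{(n)})$, and by the definition (\ref{IC-inv}), $\chi^{(c)}_{-y}(X^{(n)}, \MC^{(n)}) = {I\chi^{(c)}_{-y}}(X^{(n)}, \LL^{(n)})$. Substituting this $\MC$ on both sides of the exponential form of (\ref{eq:cor-main1}) yields (\ref{IC-genera}) at once.

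The Euler characteristic and arithmetic genus formulae follow by specializing (\ref{IC-genera}) at $y=1$ and $y=0$ respectively, using the identifications ${I\chi_{-1}^{(c)}}(X,\LL) = {\chi^{IH}_{(c)}}(X,\LL)$ and ${I\chi_0^{(c)}}(X,\LL) = {I\chi_a^{(c)}}(X,\LL)$ recorded in the list above. At these specializations the summand inside the exponent is independent of $r$, so the exponent collapses to a constant multiple of $\sum_{r \geq 1} t^r/r = -\log(1-t)$, producing the asserted $(1-t)^{-\bullet}$ forms.

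For the signature formula (\ref{IC-sig}), I specialize (\ref{IC-genera}) at $y=-1$. Under the stated hypotheses on $X$ and $\LL$, Saito's Hodge index theorem identifies $I\chi_1(X,\LL)$ with the Goresky--MacPherson signature $\sigma(X,\LL)$, as recorded in case $y=-1$. Since $-y^r = (-1)^{r+1}$ at $y=-1$, the $r$-th exponent summand equals $\sigma(X,\LL)$ for $r$ odd and $\chi^{IH}(X,\LL)$ for $r$ even. Splitting the series by parity and using the elementary identities
\[
\sum_{r \text{ odd}} \frac{t^r}{r} = \tfrac{1}{2}\log\frac{1+t}{1-t}, \qquad \sum_{r \text{ even}} \frac{t^r}{r} = -\tfrac{1}{2}\log\bigl((1-t)(1+t)\bigr),
\]
exponentiation and routine bookkeeping of the exponents produces the right-hand side of (\ref{IC-sig}). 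No substantive obstacle arises: given the non-trivial inputs (\ref{ihp-twist}), Corollary \ref{cor-main1}, and the Hodge index theorem, the entire corollary is a specialization exercise, with the only mild care needed in the parity split for the signature case.
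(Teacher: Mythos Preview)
Your proposal is correct and follows exactly the approach the paper intends: the corollary is stated without a separate proof in the paper precisely because it is the specialization of Corollary~\ref{cor-main1} to $\MC={IC'}_X^H(\LL)$ via (\ref{ihp-twist}) and (\ref{IC-inv}), followed by evaluating at $y=1,0,-1$ as in the list immediately preceding the corollary. Your parity split at $y=-1$ and the resulting computation of the signature generating series are also correct and make explicit what the paper leaves to the reader.
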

The right-hand side of equation (\ref{IC-sig}) is a rational function in $t$ since, by Poincar\'e duality for twisted intersection homology, the Goresky-MacPherson signature and the intersection homology Euler characteristic have the same parity. 
In the special case of a projective rational homology manifold $X$ one has the isomorphism ${IC'}_X^H\simeq \Q_X^H$, whence $IH^*(X)=H^*(X)$, so we get back in this context the result (\ref{Za-sig})
by Hirzebruch and Zagier \cite{Za}.
For more general versions of formula (\ref{IC-sig}) see also \cite{MSS}.\\

Note that our coefficients $\MC\in D^b\mh(X)$, e.g., $\Q_X^H$ or ${IC'}_X^H(\LL)$, are in general highly complicated objects. For this reason, we give a proof of our results based only 
on suitable abstract formal properties of the category $D^b\mh(X)$, 
all of which are contained in the very deep work of M. Saito (\cite{Sa, Sa0, Sa1}) together with the recent paper \cite{MSS}.
The key underlying structures
become much better visible in this abstract context,
and this method of proof of Theorem \ref{main1} also applies to other situations, e.g., it yields the following closely related results:
\begin{theorem}\label{main2} Assume
\begin{enumerate}
\item[(a)] $X$ is a complex algebraic variety or a compact complex analytic set
(or a real semi-algebraic or compact subanalytic set, or a compact Whitney stratified set).
Let $\FC\in D^b_c(X)$ be a bounded complex of sheaves of vector spaces over a field $\K$ of characteristic zero, which is {\em constructible} (in the corresponding sense).
Then $H^*_{(c)}(X,\FC)$ is finite dimensional (\cite{Sch}), so that the corresponding Euler
characteristic 
$$\chi_{(c)}(X,\FC):= \chi\left( H^*_{(c)}(X,\FC) \right)$$
is defined. Then the same is true for $\chi_{(c)}(X^{(n)},\FC^{(n)})$, and
\begin{equation}
\sum_{n \geq 0} {\chi_{(c)}}(X^{(n)},\FC^{(n)})\cdot t^n =(1-t)^{-{\chi_{(c)}}(X,\FC)}
= \exp \left( \sum_{r \geq 1} {\chi_{(c)}}(X,\FC) \cdot \frac{t^r}{r} \right) \:.
\end{equation}
\item[(b)] $X$ is a compact complex algebraic variety or analytic set,
and $\FC\in D^b_{coh}(X)$ is a bounded complex of sheaves of $\OO_X$-modules with
{\em coherent} cohomology sheaves. Then $H^*(X,\FC)$ is finite dimensional by Serre's finiteness theorem.
So one can define 
$$\chi_a(X,\FC):=\chi\left( H^*(X,\FC) \right)$$
and similarly for $\chi_a(X^{(n)},\FC^{(n)})$. Then
\begin{equation}
\sum_{n \geq 0} {\chi_a}(X^{(n)},\FC^{(n)})\cdot t^n =(1-t)^{-{\chi_a}(X,\FC)}
= \exp \left( \sum_{r \geq 1} {\chi_a}(X,\FC) \cdot \frac{t^r}{r} \right) \:.
\end{equation}
\end{enumerate}
\end{theorem}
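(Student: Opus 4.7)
The plan is to run the abstract pre-lambda ring machinery developed in the body of the paper in two additional settings: (a) the triangulated category $D^b_c(X)$ of bounded constructible complexes of $\K$-vector spaces, and (b) the triangulated category $D^b_{coh}(X)$ of bounded complexes of $\OO_X$-modules with coherent cohomology. In both cases the target ``point category'' is $D^b_f(\K\text{-vec})$, the bounded derived category of finite-dimensional $\K$-vector spaces (with $\K=\C$ in (b)); this is Karoubian and $\Q$-linear in characteristic zero, and its class group $\bar{K}_0$ carries a canonical pre-lambda structure given by graded symmetric powers, whose ring map to $\Z$ induced by $\chi$ is the Euler characteristic. The exterior product $\boxtimes$ together with the finite pushforward $(p_n)_*=(p_n)_!$ and proper pushforward to a point furnish the required K\"unneth-functor data; once an equivariant K\"unneth isomorphism analogous to (\ref{Kue-mHs}) is in place, the generating series follow from the same formalism used to prove Theorem \ref{main1}.

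The first step is therefore the K\"unneth isomorphism. In case (a), constructibility plus the finiteness of $H^*_{(c)}(X,\FC)$ cited from \cite{Sch}, together with the K\"unneth formula for $R\Gamma_{(c)}$ applied to constructible complexes on $X^n$ and the fact that $(p_n)_*=(p_n)_!$ for the finite map $p_n\colon X^n\to X^{(n)}$, produce a $\Sigma_n$-equivariant isomorphism
$$H^*_{(c)}(X^{(n)},\FC^{(n)})\simeq \bigl(H^*_{(c)}(X^n,\FC^{\boxtimes n})\bigr)^{\Sigma_n}\simeq \bigl(H^*_{(c)}(X,\FC)^{\otimes n}\bigr)^{\Sigma_n}\:,$$
where $\Sigma_n$ acts on the rightmost term by signed permutation of tensor factors (Koszul sign rule). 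In case (b), Serre's finiteness theorem combined with the derived K\"unneth formula for coherent sheaves on a product of compact complex spaces yields the analogous equivariant identification with $H^*$ in place of $H^*_{(c)}$. Exactness of $(-)^{\Sigma_n}$ in characteristic zero then allows invariants to commute past taking cohomology.

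The remainder is a single universal identity: for any bounded $\Z$-graded finite-dimensional $\K$-vector space $V$,
$$\sum_{n\geq 0}\chi\bigl((V^{\otimes n})^{\Sigma_n}\bigr)\cdot t^n \;=\; (1-t)^{-\chi(V)} \;=\; \exp\Bigl(\sum_{r\geq 1}\chi(V)\cdot \frac{t^r}{r}\Bigr)\:.$$
Splitting $V=V^{\mathrm{ev}}\oplus V^{\mathrm{odd}}$, the Koszul-signed invariants realize the super-symmetric power $S^n V$, whose Poincar\'e series equals $\prod_{k\text{ even}}(1-z^kt)^{-\dim V^k}\cdot \prod_{k\text{ odd}}(1+z^kt)^{\dim V^k}$; the specialization $z=-1$ collapses this to $(1-t)^{-\chi(V)}$. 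Applied to $V=H^*_{(c)}(X,\FC)$ (resp.\ $V=H^*(X,\FC)$) and combined with the K\"unneth step, this gives the theorem. The hardest part is tracking the $\Sigma_n$-equivariance with the correct Koszul signs through the K\"unneth isomorphism when descending from complexes of sheaves on $X^n$ to tensor powers of $H^*_{(c)}(X,\FC)$; in case (b) there is an additional technical reduction of the multivariate derived K\"unneth formula for coherent sheaves to iterated binary K\"unneth, which relies on standard projection-formula arguments valid on compact complex spaces.
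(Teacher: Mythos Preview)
Your proposal is correct and follows essentially the same route as the paper: verify that $D^b_c(-)$ and $D^b_{coh}(-)$ satisfy the categorical Assumptions~\ref{ass4} (in particular the equivariant K\"unneth isomorphism, which the paper packages as (\ref{key}) and handles via the Appendix together with \cite{Bi} for the Koszul-sign tracking), and then reduce everything to a pre-lambda computation over the point. The only cosmetic difference is in this last step: the paper invokes Theorem~\ref{adams-thm} and then pushes through the pre-lambda ring homomorphism $\chi:\bar{K}_0(D^b_{c/coh}(pt))\to\Z$ (with $\Psi_r=id$ on $\Z$), whereas you compute the generating series for $\chi$ of graded symmetric powers directly --- your computation is exactly Proposition~\ref{prop:h} in one variable specialized to $z=1$, so the two arguments coincide.
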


The proof of our results, as discussed in Section \ref{sec:pre-lambda}, 
relies on exploring and understanding
the relation between generating series for suitable invariants and {\em pre-lambda structures} on the coefficient theory on a point space. This leads us to a conceptually quick proof of our results. Moreover, it also suggests to consider the corresponding \emph{alternating projector} 
$$\PC^{alt}=(-)^{sign-\Sigma_n}:=
 \frac{1}{n!}\sum_{\sigma \in \Sigma_n} (-1)^{sign(\sigma)}\cdot \psi_{\sigma}$$ 
onto the {\em alternating} $\Sigma_n$-equivariant sub-object (with $\psi_{\sigma}$ given by the $\Sigma_n$-action as  before).\\

For a mixed Hodge module complex $\MC\in D^b\mh(X)$ on the complex quasi-projective variety $X$,
we define the corresponding {\em alternating object} 
\begin{equation}\label{alt-proj}
\MC^{\{n\}}:=\PC^{alt}({p_n}_*\MC^{\boxtimes{n}})= ({p_n}_*\MC^{\boxtimes{n}})^{sign-\Sigma_n} 
\in D^b\mh(X^{(n)}) \:.
\end{equation}
If, moreover, $rat(\MC)$ is a constructible sheaf (sitting in degree zero, and not a sheaf complex),
the following additional equivalent properties hold:
\begin{equation} \label{alt-MHM}
j_!j^*\MC^{\{n\}} \simeq \MC^{\{n\}} \quad \text{and} \quad i^*\MC^{\{n\}}\simeq 0 \:,
\end{equation}  
were $j: B(X,n)=X^{\{n\}}:= F(X,n)/\Sigma_n \to X^{(n)}$ is the open inclusion of the configuration space
$B(X,n)=X^{\{n\}}$ of all unordered $n$-tuples of different points in $X$, and $i$ is the closed inclusion of the complement of $X^{\{n\}}$ into $X^{(n)}$.
In fact, as in the proof of \cite{CMSS}[Lem.5.3], it is enough to show the corresponding statement $i^*\FC^{\{n\}}\simeq 0$
for the underlying constructible sheaf $\FC:=rat(\MC)$, since the alternating projector commutes with $rat$
(see \cite{MSS}). This vanishing can be checked on stalks. Finally, the stalk
of $\FC^{\{n\}}$ at a point $\bar{x}=(x_1,\dots,x_n)\in X^{(n)}\backslash B(X,n)$
is given as the image of the alternating projector acting on
$$({p_n}_*\FC^{\boxtimes{n}})_{\bar{x}}\simeq \oplus_{\{y|\; p_n(y)=\bar{x}\}} \: \FC^{\boxtimes n}_{y} \:,$$
and  for each $y\in p_n^{-1}(\{\bar{x}\})$ there is a transposition
$\tau \in \Sigma_n$ fixing $y$. This implies the claim.\\

So under these assumptions on $\MC$, one has a  K\"{u}nneth formula 
\begin{equation} \label{Kue-mHs-alt}
H^*_{c}(X^{\{n\}},\MC^{\{n\}})\simeq (H^*_{c}(X^n,\MC^{\boxtimes n}))^{sign-\Sigma_n} \simeq 
((H^*_{c}(X,\MC))^{\otimes{n}})^{sign-\Sigma_n} 
\end{equation}
for the cohomology with compact support. Again these are isomorphisms of graded groups of mixed Hodge structures 
and, for suitable choices, $\MC^{\{n\}}|_{X^{\{n\}}}$ becomes a highly interesting object on the configuration space
$B(X,n)=X^{\{n\}}$ of all unordered $n$-tuples of different points in $X$. \\

For example, $(\Q^H_X)^{\{n\}}|_{X^{\{n\}}}$ is a mixed Hodge module complex, whose underlying
rational sheaf complex is just the rank-one locally constant sheaf $\epsilon_n$ on $X^{\{n\}}$,
corresponding to the sign-representation of $\pi_1(X^{\{n\}})$ induced by the quotient homomorphism
$\pi_1(X^{\{n\}})\to \Sigma_n$ of the Galois covering $F(X,n)\to X^{\{n\}}$.
So for $X$ smooth we can think of $(\Q^H_X)^{\{n\}}|_{X^{\{n\}}}$ just as the corresponding variation of pure Hodge structures $\epsilon_n$ (of weight $0$) on the smooth variety
$X^{\{n\}}$. Similarly, for a smooth quasi-projective variety $X$ and
$\LL$  an admissible (graded polarizable) variation of mixed Hodge structures on $X$, with quasi-unipotent monodromy at infinity, one has on the smooth variety
$X^{\{n\}}$ the equality:
\begin{equation}\label{ihp-alt} 
\LL^{\{n\}}=\epsilon_n\otimes \LL^{(n)} \:, \end{equation}
These examples provide interesting coefficients that can be used in the following result (for the special case of constant coefficients
$\MC =\Q^H_X$ compare with \cite{Ge1}[Cor.5.7]).

\begin{cor} \label{MHM-alt}
Let $X$ be a complex quasi-projective variety and $\MC \in D^b\mh(X)$ a bounded complex of mixed Hodge modules on $X$ such that
$rat(\MC)$ is a constructible sheaf. 
Then the following formulae hold for the generating series of invariants of configuration spaces $X^{\{n\}}$ of unordered distinct points in $X$:
\begin{equation} \label{gen-MHM1-alt} 
\begin{split}
 \sum_{n \geq 0} h_{c}(X^{\{n\}},\MC^{\{n\}}) \cdot t^n  
 &=\prod_{p,q,k} \left(1+y^px^qz^kt\right)^{(-1)^k\cdot h_{c}^{p,q,k}(X,\MC)}\\
&= \exp \left( -\sum_{r \geq 1}  h_{c}(X,\MC)(y^r,x^r,z^r) \cdot \frac{(-t)^r}{r} \right) \:.
\end{split}
\end{equation}
\begin{equation} \label{gen-MHM2-alt} \begin{split}
\sum_{n \geq 0} e_{c}(X^{\{n\}},\MC^{\{n\}})  \cdot t^n &= \prod_{p,q} 
\left(1+y^px^qt\right)^{ e_{c}^{p,q}(X,\MC)} \\
& = \exp \left( -\sum_{r \geq 1}  e_{c}(X,\MC)(y^r,x^r) \cdot \frac{(-t)^r}{r} \right) \:.
\end{split} \end{equation}
\begin{equation}\label{eq:cor-main1-alt} \begin{split}
\sum_{n \geq 0} \chi^{c}_{-y}(X^{\{n\}},\MC^{\{n\}})  \cdot t^n &= \prod_{p} 
\left(1+y^pt\right)^{ f_{c}^{p}(X,\MC)} \\
& = \exp \left( -\sum_{r \geq 1}  \chi^{c}_{-y^r}(X,\MC) \cdot \frac{(-t)^r}{r} \right) \:.
\end{split} \end{equation}
\end{cor}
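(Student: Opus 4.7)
The plan is to parallel the pre-lambda ring proof of Theorem \ref{main1}, replacing the symmetric projector $\PC^{sym}$ by the alternating projector $\PC^{alt}$ and, correspondingly, the symmetric-power operation $\sigma^n$ by the exterior-power operation $\lambda^n$ on the relevant Grothendieck ring $\bar K_0$ of graded polarizable mixed Hodge structures. The K\"{u}nneth isomorphism \eqref{Kue-mHs-alt} identifies
\[ [H^*_c(X^{\{n\}},\MC^{\{n\}})] \;=\; \lambda^n[H^*_c(X,\MC)] \]
in this ring. Here the hypothesis that $rat(\MC)$ is a constructible sheaf in degree zero enters only through the vanishing \eqref{alt-MHM}, which guarantees $\MC^{\{n\}}\simeq j_!j^*\MC^{\{n\}}$ and hence that compactly supported cohomology on the configuration space $X^{\{n\}}$ agrees with $H^*_c(X^{(n)},\MC^{\{n\}})$, to which \eqref{Kue-mHs-alt} applies.

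The main algebraic input is the standard pre-lambda ring identity
\[ \lambda_t(x)\;:=\;\sum_{n\geq 0}\lambda^n(x)\,t^n\;=\;\exp\!\left(-\sum_{r\geq 1}\psi_r(x)\,\frac{(-t)^r}{r}\right), \]
dual to the symmetric-power version via $\sigma_t(x)\lambda_{-t}(x)=1$, with $\psi_r$ the Adams operations described earlier in the paper. On an atomic class $[\Q^H(-p,-q)[-k]]$ the operation $\lambda_t$ returns the single factor $1+y^px^qz^kt$ once one passes to the Hodge number polynomial $h_c$, which is a pre-lambda ring homomorphism with Adams operations $\psi_r(y,x,z)=(y^r,x^r,z^r)$. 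Writing
\[ [H^*_c(X,\MC)]\;=\;\sum_{p,q,k}(-1)^k h^{p,q,k}_c(X,\MC)\cdot[\Q^H(-p,-q)[-k]] \]
and invoking multiplicativity $\lambda_t(a+b)=\lambda_t(a)\lambda_t(b)$ (so that negative multiplicities contribute reciprocal factors) immediately yields the infinite-product form of \eqref{gen-MHM1-alt}, while the exponential identity above produces the $\exp$-form. Formulas \eqref{gen-MHM2-alt} and \eqref{eq:cor-main1-alt} then follow by the ring-homomorphism specializations $z=1$ and $(z,x)=(1,1)$, after the obvious identifications of polynomials.

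The step requiring the most care is the dictionary between the sign-$\Sigma_n$-invariants functor on $\MC^{\boxtimes n}$ and the pre-lambda operation $\lambda^n$ on $\bar K_0$. Because $H^*_c(X,\MC)$ is a graded object while the $\Sigma_n$-action on its tensor power involves Koszul signs, one must check that the interplay between the extra sign coming from $\PC^{alt}$ and the Koszul sign from the grading correctly realizes $\lambda^n$ rather than $\sigma^n$. This is precisely the super-symmetric analogue of the identification $(-)^{\Sigma_n}\leftrightarrow\sigma^n$ used in Theorem \ref{main1}, and once this compatibility is established the remainder of the argument is purely formal from the pre-lambda ring structure already set up for the symmetric case.
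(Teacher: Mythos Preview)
Your proposal is correct and follows essentially the same route as the paper: the paper deduces Corollary~\ref{MHM-alt} from the abstract formula (\ref{adams2}) of Theorem~\ref{adams-thm} (which is precisely your identification $\sum_n[k_*\MC^{\{n\}}]t^n=\lambda_t([k_*\MC])$ together with the Adams-operation expression for $\lambda_t$), and then applies the pre-lambda ring homomorphism $h$ of Proposition~\ref{prop:h}, with (\ref{gen-MHM2-alt}) and (\ref{eq:cor-main1-alt}) obtained by specialization. Your discussion of the Koszul-sign compatibility and of atomic classes $[\Q^H(-p,-q)[-k]]$ is exactly what the paper packages into (\ref{opp-pre-lambda}) and the proof of Proposition~\ref{prop:h}, and your observation that the hypothesis on $rat(\MC)$ enters only via (\ref{alt-MHM}) is also how the paper uses it.
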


Abstracting the properties of  $D^b\mh(X)$ needed in our proof of Theorem \ref{main1}, we come to the following assumptions
used in the formulation and proof of our main result below, which unifies all our previous results: 

\begin{ass}\label{ass4}
\begin{enumerate}
 \item[(i)] Let $(-)_*$ be a (covariant) pseudo-functor on the category of complex quasi-projective varieties (with proper morphisms), taking values in a
pseudo-abelian (also called Karoubian) $\Q$-linear additive category $A(-)$.
\item[(ii)] For any quasi-projective variety $X$ and all $n$ there is a multiple external product 
$ \boxtimes^n:  \times^n\;A(X) \to A(X^n)$, equivariant with respect to a permutation action of $\Sigma_n$,
i.e., $M^{\boxtimes n}\in A(X^n)$ is a $\Sigma_n$-equivariant object for all $M\in A(X)$.
\item[(iii)] $A(pt)$ is endowed with a $\Q$-linear tensor structure $\otimes$, which makes it into a symmetric monoidal category.
\item[(iv)]  For any quasi-projective variety $X$, $M\in A(X)$ and all $n$, there is $\Sigma_n$-equivariant isomorphism
$k_*(M^{\boxtimes n})\simeq (k_*M)^{\otimes n}$, with $k$ the constant morphism to a point $pt$. Here, the $\Sigma_n$-action on the left-hand side
is induced from (ii), whereas the one on the right-hand side comes from (iii).
\end{enumerate}\end{ass}
Properties (i) and (ii) allow us to define for $M\in A(X)$ the \emph{symmetric} and \emph{alternating powers} $M^{(n)}, M^{\{n\}}\in A(X^{(n)})$
as in (\ref{sym-proj}) and (\ref{alt-proj}). For $X$ a point $pt$,  properties (i) and (iii) endow the Grothendieck group (with respect to direct sums)
$\bar{K}_0(A(pt))$  with a pre-lambda ring structure defined by (compare \cite{Hl}):
\begin{equation} 
\sigma_t: \bar{K}_0(A(pt))\to \bar{K}_0(A(pt))[[t]]\:;\:\:
[\VV] \mapsto 1+ \sum_{n\geq 1}\; [(\VV^{\otimes n})^{\Sigma_n}] \cdot t^n \:,
\end{equation}
with the \emph{opposite pre-lambda structure} induced by the alternating powers $[(\VV^{\otimes n})^{alt-\Sigma_n}]$. 

We can now state the following abstract generating series formula:
\begin{theorem}\label{adams-thm}
Under the above assumptions, the following  holds in $\bar{K}_0(A(pt))\otimes_{\Z}\Q[[t]]$,
for $X$ a quasi-projective variety and 
any $M\in ob(A(X))$:
\begin{equation}\label{adams1}
1+\sum_{n\geq 1}\;  [k_*M^{(n)}] \cdot t^n = 
\exp\left( \sum_{r\geq 1}\;\Psi_r([k_*M]) \cdot \frac{t^r}{r}\right) 
\end{equation}
and
\begin{equation}\label{adams2}
1+\sum_{n\geq 1}\; [k_*M^{\{n\}}] \cdot t^n = 
\exp\left( -\sum_{r\geq 1}\;\Psi_r([k_*M]) \cdot \frac{(-t)^r}{r}\right) \:,
\end{equation}
 with $\Psi_r$ denoting the $r$-th \emph{Adams operation} of the pre-lambda ring $\bar{K}_0(A(pt))$.
\end{theorem}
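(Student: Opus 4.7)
The plan is to reduce equation (\ref{adams1}) to the classical identity $\sigma_t = \exp\!\left(\sum_{r\geq 1} \Psi_r\cdot t^r/r\right)$ valid in any pre-lambda ring after tensoring with $\Q$. First I would show that Assumption \ref{ass4} allows one to rewrite the left-hand side of (\ref{adams1}) as a direct application of $\sigma_t$ to the class $[k_*M]\in \bar{K}_0(A(pt))$. Starting from $M^{(n)}=({p_n}_*M^{\boxtimes n})^{\Sigma_n}$, and using that $\Sigma_n$ acts trivially on $pt$ together with pseudo-functoriality of $(-)_*$ from (i), I can commute $k_*$ past the symmetric projector $\PC^{sym}$ (which is a $\Q$-linear combination of automorphisms and hence preserved in the Karoubian $\Q$-linear category $A(pt)$), obtaining
\begin{equation*}
k_*M^{(n)} \simeq \bigl(k_*{p_n}_* M^{\boxtimes n}\bigr)^{\Sigma_n} \simeq \bigl((k\circ p_n)_* M^{\boxtimes n}\bigr)^{\Sigma_n}\:.
\end{equation*}
Assumption (iv) then supplies a $\Sigma_n$-equivariant isomorphism $(k\circ p_n)_* M^{\boxtimes n}\simeq (k_*M)^{\otimes n}$, giving the key identification $[k_*M^{(n)}] = \bigl[((k_*M)^{\otimes n})^{\Sigma_n}\bigr]$ in $\bar{K}_0(A(pt))$. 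Consequently, the left-hand side of (\ref{adams1}) coincides with $\sigma_t([k_*M])$.

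With this identification, formula (\ref{adams1}) reduces to the universal statement $\sigma_t(x) = \exp\!\left(\sum_{r\geq 1}\Psi_r(x)\,t^r/r\right)$ in an arbitrary pre-lambda ring tensored with $\Q$. This is a formal consequence of the Newton--Girard relations between power sums and complete symmetric functions, which can be taken as the very definition of the Adams operations $\Psi_r$ via $\Psi_r := -t\,d/dt\,\log\sigma_{-t}$; the required identity then follows by a direct manipulation of formal power series, using only that $\sigma_t$ is a multiplicative group-like element of $\bar{K}_0(A(pt))[[t]]$.

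For equation (\ref{adams2}), the identical chain of reductions — now with the alternating projector $\PC^{alt}=(-)^{sign-\Sigma_n}$ in place of $\PC^{sym}$, which is again a $\Q$-linear combination of automorphisms and so commutes with $k_*$ — yields $[k_*M^{\{n\}}] = \bigl[((k_*M)^{\otimes n})^{sign-\Sigma_n}\bigr]$, so that the left-hand side equals $\lambda_t([k_*M])$, where $\lambda_t$ is the opposite pre-lambda structure on $\bar{K}_0(A(pt))$. The classical duality $\sigma_t(x)\cdot \lambda_{-t}(x) = 1$, valid in any pre-lambda ring, combined with the formula for $\sigma_t$ just proved, then gives
\begin{equation*}
\lambda_t(x) = \sigma_{-t}(x)^{-1} = \exp\!\left(-\sum_{r\geq 1}\Psi_r(x)\cdot \frac{(-t)^r}{r}\right)\:,
\end{equation*}
which is precisely (\ref{adams2}).

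The main obstacle is not the final power-series manipulation but the bookkeeping needed to make the above reductions rigorous in the abstract setting: one must verify that the Karoubian hypothesis genuinely produces the symmetric and alternating sub-objects, that $k_*$ preserves these projectors in a $\Sigma_n$-equivariant way, that the Künneth isomorphism from (iv) intertwines the two a priori distinct $\Sigma_n$-actions (from (ii) on the left and from the symmetric monoidal structure (iii) on the right) and not merely their underlying objects, and finally that these compatibilities endow $\bar{K}_0(A(pt))$ with a bona fide pre-lambda ring structure. Once these formal compatibilities are in place — and this is the content of Assumption \ref{ass4} — the theorem follows from the universal computation in a pre-lambda ring sketched above.
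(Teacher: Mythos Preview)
Your proposal is correct and follows essentially the same route as the paper: you establish the key identifications $k_*M^{(n)}\simeq ((k_*M)^{\otimes n})^{\Sigma_n}$ and $k_*M^{\{n\}}\simeq ((k_*M)^{\otimes n})^{sign-\Sigma_n}$ by commuting $k_*$ past the projectors and invoking Assumption~\ref{ass4}(iv), then recognize the generating series as $\sigma_t([k_*M])$ and $\lambda_t([k_*M])$, and finally appeal to the standard pre-lambda ring identity (\ref{adams}) together with $\lambda_t=\sigma_{-t}^{-1}$. This is precisely the argument the paper gives via equations (\ref{k*!}) and (\ref{key}).
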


Property (iv) is used in the proof of this result (as given in Section \ref{sec:pre-lambda}) as a substitute for the K\"{u}nneth formula 
(\ref{Kue-mHs}). As we explain in the next section, these abstract generating series formulae 
translate into the concrete ones mentioned before, upon application 
 of certain pre-lambda ring homomorphisms.
Additionally, in Section \ref{Adams} we follow Atiyah's approach \cite{A} to power operations in $K$-theory.
In this way, we can extend  operations like symmetric or alternating powers from operations on the ``coefficient pre-lambda ring'' to a method of introducing 
very interesting coefficients on the symmetric products $X^{(n)}$, e.g. besides  $M^{(n)},M^{\{n\}}$,
we also get such coefficients related to the corresponding {\em Adams operations} $\Psi_r$.\\ 

Note that our Assumptions \ref{ass4} above are fullfilled for $A(X)=D^b\mh(X)$, viewed as a pseudo-functor with respect to either of the push-forwards $(-)_*$ or $(-)_!$.
Here (i) and (iii) follow from \cite{Sa}, e.g. (iii) follows form the equivalence of categories between $\mh(pt)$ and $\ms^p$ (as already mentioned before),
whereas (ii) and (iv) follow from our recent paper \cite{MSS}.
Finally, in the Appendix we collect in an abstract form some categorical notions needed to formulate a relation between {\em exterior products} and
{\em equivariant objects}, which suffices to show that our assumptions (i)-(iv) above are also fullfilled in many other situations, e.g.,
for the derived categories $D^b_c(X)$ and $D^b_{coh}(X)$.

%%%%%%%%%%%%%%%%%%%%%%%%%%%%%%%%%%%%%%
\section{Symmetric products and pre-lambda rings}\label{sec:pre-lambda}
\subsection{Abstract generating series and Pre-lambda rings}

We work on an underlying geometric category $space$ of spaces (with finite products $\times$ and terminal object $pt$), e.g., the category of complex quasi-projective varieties (or compact topological or complex analytic spaces). We also assume that for all $X$ and $n$, the projection morphism
$p_n: X^n\to X^{(n)}=X^n/\Sigma_n$ to the $n$-th symmetric product exists in our category of spaces (e.g., as in the previously mentioned examples).
Let $(-)_*$ be a (covariant) pseudo-functor on this category of spaces, taking values in a
pseudo-abelian (also called Karoubian) $\Q$-linear additive category $A(-)$,
satisfying the Assumptions \ref{ass4}.\\

For a given space $X$ and $M\in A(X)$, 
the $n$-th exterior product $M^{\boxtimes n}\in A(X^n)$ underlies by Assumption \ref{ass4}(ii)
a $\Sigma_n$-equivariant object (with respect to the pseudo-functor $(-)_*$)
$$M^{\boxtimes n}\in A_{\Sigma_n}(X^n)$$
on $X^n$. Here $A_{\Sigma_n}(X^n)$ is the category of $\Sigma_n$-equivariant objects in $A(X^n)$,
as defined in the Appendix. By functoriality, one gets the induced  $\Sigma_n$-equivariant object
$$p_{n*}M^{\boxtimes n} \in A_{\Sigma_n}(X^{(n)}) \:.$$
Since $\Sigma_n$ acts trivially on $X^{(n)}$, this corresponds to an action of $\Sigma_n$ on $p_{n*}M^{\boxtimes n}$ in $A(X^{(n)})$, i.e., a group homomorphism $\psi: \Sigma_n \to Aut_{A(X^{(n)})}(p_{n*}M^{\boxtimes n})$ (see the Appendix for more details).
Since $A(X^{(n)})$ is a $\Q$-linear additive category, this allows us to define the projectors
$$\PC^{sym}:=(-)^{\Sigma_n} = \frac{1}{n!}\sum_{\sigma \in \Sigma_n} \psi_{\sigma}
\quad \text{and} \quad \PC^{alt}=(-)^{sign-\Sigma_n}:=
 \frac{1}{n!}\sum_{\sigma \in \Sigma_n} (-1)^{sign(\sigma)}\cdot \psi_{\sigma}\,$$
acting on elements in $A_{\Sigma_n}(X^{(n)})$. 
Finally, the ``pseudo-abelian" (or Karoubian) structure is used to define the 
\emph{symmetric} and resp. \emph{alternating} powers of $M$:
\begin{equation}M^{(n)}:=\PC^{sym}({p_n}_*M^{\boxtimes{n}})  \quad \text{resp.} ,\quad M^{\{n\}}:=\PC^{alt}({p_n}_*M^{\boxtimes{n}}) 
\in A(X^{(n)}) \:.\end{equation} 

The generating series we are interested in codify invariants of $k_*M^{(n)}$ and $k_*M^{\{n\}}$, respectively, with $k$
the constant morphism to the terminal object $pt\in ob(space)$. By Assumption \ref{ass4}(iv), we have that
\begin{equation}\label{Kue-pt}
 k_{*}(M^{\boxtimes n}) \simeq (k_*M)^{\otimes n} \in A_{\Sigma_n}(pt)\:.
\end{equation}
Note that the image of an object acted upon by a projector $\PC$ as above is {\it functorial} under $\Sigma_n$-equivariant morphisms. For example, if we let 
$k: X^{(n)} \to pt$ be the constant map to a point,   then for any $M \in A(X)$ we have: 
\begin{equation} \label{k*!}
k_*(\PC({p_n}_*M^{\boxtimes{n}}))=\PC(k_*{p_n}_*M^{\boxtimes{n}}) \simeq \PC\left( (k\circ p_n)_*M^{\boxtimes{n}}\right) \simeq
\PC\left((k_*M)^{\otimes n}\right) \:.
\end{equation}
In particular, by using $\PC=\PC^{sym}$ and resp. $\PC=\PC^{alt}$, we get the following key isomorphisms in $A(pt)$ (abstracting the K\"{u}nneth formulae
(\ref{Kue-mHs}) and (\ref{Kue-mHs-alt}) from the introduction):
\begin{equation}\label{key}
k_*(M^{(n)}) \simeq \left((k_*M)^{\otimes n}\right)^{\Sigma_n} \quad \text{resp.} , \quad
k_*(M^{\{n\}}) \simeq \left((k_*M)^{\otimes n}\right)^{sign-\Sigma_n} \:.
\end{equation}
This allows the calculation of our invariants of $k_*M^{(n)}$ and $k_*M^{\{n\}}$, respectively, 
 in terms of those for $k_*M \in A(pt)$ and the symmetric monoidal structure $\otimes$. At this point, we note that the above arguments make use of all of our Assumptions \ref{ass4}.\\
 
Let $\bar{K}_0(-)$ denote the Grothendieck group of an additive
category viewed as an exact category by the split exact sequences corresponding to direct sums
$\oplus$, i.e., the Grothendieck group associated to the abelian monoid of isomorphism classes of objects with the direct sum. Then $\bar{K}_0(A(pt))$ becomes a commutative ring with unit $1_{pt}$ and product induced by $\otimes$.
Moreover, Assumptions \ref{ass4}(i) and (iii) endow  $\bar{K}_0(A(pt))$ with
 a canonical {\em pre-lambda structure} (\cite{Hl}):
\begin{equation} \label{pre-lambda}
\sigma_t: \bar{K}_0(A(pt))\to \bar{K}_0(A(pt))[[t]]\:;\:\:
[\VV] \mapsto 1+ \sum_{n\geq 1}\; [(\VV^{\otimes n})^{\Sigma_n}] \cdot t^n \:,
\end{equation}
with the \emph{opposite pre-lambda structure} $\lambda_t:=\sigma_{-t}^{-1}$ induced by the alternating powers $[(\VV^{\otimes n})^{alt-\Sigma_n}]$:
\begin{equation} \label{opp-pre-lambda}
\lambda_t: \bar{K}_0(A(pt))\to \bar{K}_0(A(pt))[[t]]\:;\:\:
[\VV] \mapsto 1+ \sum_{n\geq 1}\; [(\VV^{\otimes n})^{sign-\Sigma_n}] \cdot t^n \:.
\end{equation}
This opposite pre-lambda structure $\lambda_t$ is sometimes more natural,
e.g., it is  a lambda structure \cite{Hl}[Lem.4.1].
Recall here that a pre-lambda structure on a commutative ring $R$ with unit $1$ just means a group homomorphism
$$\sigma_t: (R,+)\to (R[[t]],\cdot)\:;\:\:
r \mapsto 1+ \sum_{n\geq 1}\; \sigma_n(r) \cdot t^n $$
with $\sigma_1=id_R$, where ``$\cdot$" on the target side denotes the multiplication of formal power series. So this corresponds to a family of self-maps $\sigma_n: R\to R$
($n\in \N_0$) satisfying for all $r\in R$:
$$\sigma_0(r)=1,\:
\sigma_1(r)=r \quad \text{and} \quad \sigma_k(r)=\sum_{i+j=k} \sigma_i(r)\cdot \sigma_j(r)\:.$$

For a pre-lambda ring $R$, there is the well-known formula
(cf. \cite{Kn,Ge1}):
\begin{equation} \label{adams}
\sigma_t(a) =\sum_{n\geq 0} \; \sigma_n(a) \cdot t^n = \exp\left( \sum_{r\geq 1}\;
\Psi_r(a) \cdot \frac{t^r}{r}\right) \in R\otimes_{\Z}\Q[[t]]\:,
\end{equation}
following from the definition of the corresponding $r$-th \emph{Adams operation} $\Psi_r$:
$$\lambda_t(a)^{-1}\cdot \frac{d}{dt}(\lambda_t(a))=:\frac{d}{dt}(\log(\lambda_t(a)) =:
\sum_{r\geq 1} (-1)^{r-1}\Psi_r(a)\cdot t^{r-1} \in R[[t]]$$
by
$$\sigma_{-t}(a)^{-1}=\lambda_t(a) =\exp\left( \sum_{r\geq 1}\;
(-1)^{r-1}\Psi_r(a) \cdot \frac{t^r}{r}\right) \in R\otimes_{\Z}\Q[[t]]\:.$$

Applying these formulae to the pre-lambda ring $R=\bar{K}_0(A(pt))$, we get the following 
\begin{proof}[Proof of Theorem \ref{adams-thm}]
Our key isomorphisms (\ref{key}) yield
for the element $a=[k_*M]\in \bar{K}_0(A(pt))$ the following identification of the generating series
in terms of the pre-lambda structure:
$$ 
1+\sum_{n\geq 1}\;  [k_*M^{(n)}] \cdot t^n = 1+ \sum_{n\geq 1}\; [(k_*M)^{\otimes n})^{\Sigma_n}] \cdot t^n
=\sigma_t\left([k_*M]\right)\:,$$
and, resp., 
$$1+\sum_{n\geq 1}\; [k_*M^{\{n\}}] \cdot t^n = 1+ \sum_{n\geq 1}\; [(k_*M)^{\otimes n})^{sign-\Sigma_n}] \cdot t^n
=\lambda_t\left([k_*M]\right)\:.$$
\end{proof}

%%%%%%%%%%%%%%%%%%%%%%%%%%%%%%%%%

\subsection{Examples and homomorphisms of pre-lambda rings}
In this section, we explain how suitable  homomorphisms of pre-lambda rings can be used to
translate our abstract generating series formulae of Theorem \ref{adams-thm} into the concrete ones mentioned in the introduction.
We start with some more examples of pre-lambda rings. \\

First, we have the Grothendieck group
$\bar{K}_0(A(pt))$ of a pseudo-abelian $\Q$-linear category $A(pt)$ with a tensor structure $\otimes$, which makes it into a symmetric monoidal category.
This includes our pseudo-functors $A(-)$ taking values in the $\Q$-linear triangulated categories $D^b\mh(-), D^b_c(-)$ and resp. $D^b_{coh}(-)$,
which are pseudo-abelian by \cite{BS, LC}. Similarly for the pseudo-functors $A(-)$ (with respect to finite morphisms) taking values in the $\Q$-linear abelian categories of mixed Hodge modules, perverse or constructible sheaves, and coherent sheaves.
In the last examples, we also have the following 

\begin{lemma} 
If $A(pt)$ as above is also an {\em abelian category} with $\otimes$ {\em exact in both variables},
the pre-lambda structure on $\bar{K}_0(A(pt))$ descends to one on the usual Grothendieck group
$$K_0(A(pt))=\bar{K}_0(A(pt))/(ex-seq)$$ 
with a direct sum for all {\em short exact sequences}.
\end{lemma}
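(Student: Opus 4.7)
The strategy is to show that the map $\sigma_t$ defined in (\ref{pre-lambda}) on $\bar{K}_0(A(pt))$ descends to the quotient $K_0(A(pt))=\bar{K}_0(A(pt))/(ex\text{-}seq)$. Since $\sigma_t$ is already a group homomorphism $(\bar{K}_0(A(pt)),+)\to(\bar{K}_0(A(pt))[[t]],\cdot)$, it will suffice to check that for every short exact sequence $0\to\VV'\to\VV\to\VV''\to 0$ in $A(pt)$ the identity
\begin{equation*}
\sigma_t([\VV])=\sigma_t([\VV'])\cdot\sigma_t([\VV''])
\end{equation*}
holds in $K_0(A(pt))[[t]]$, i.e., that $[\sigma_n(\VV)]=[\sigma_n(\VV'\oplus \VV'')]$ in $K_0(A(pt))$ for each $n\geq 0$.

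The main step is to construct, from the two-step filtration $0\subset\VV'\subset\VV$, a $\Sigma_n$-equivariant filtration $0=F^{n+1}\subset F^n\subset\cdots\subset F^0=\VV^{\otimes n}$, where $F^k$ is the sum of the images of all $\VV^{\otimes(i_1)}\otimes\cdots\otimes\VV^{\otimes(i_n)}\to\VV^{\otimes n}$ with $i_1+\cdots+i_n\geq k$ and $i_j\in\{0,1\}$ (here $\VV^{(1)}:=\VV'$, $\VV^{(0)}:=\VV$). Because $\otimes$ is exact in both variables, the associated graded pieces are computed termwise, giving a $\Sigma_n$-equivariant isomorphism
\begin{equation*}
\mathrm{Gr}^k(\VV^{\otimes n})\;\simeq\;\bigoplus_{\substack{S\subset\{1,\dots,n\}\\ |S|=k}}(\VV')^{\otimes S}\otimes(\VV'')^{\otimes([n]\setminus S)},
\end{equation*}
and summing over $k$ expands the binomial, yielding a $\Sigma_n$-equivariant isomorphism $\bigoplus_k \mathrm{Gr}^k(\VV^{\otimes n})\simeq(\VV'\oplus \VV'')^{\otimes n}$.

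Next I would apply the invariants functor $(-)^{\Sigma_n}$ to this filtration. Since $A(pt)$ is $\Q$-linear abelian and $\Sigma_n$ is finite, the projector $\PC^{sym}=\tfrac{1}{n!}\sum_{\sigma\in\Sigma_n}\psi_\sigma$ realises $(-)^{\Sigma_n}$ as a direct summand of the identity endofunctor on $\Sigma_n$-equivariant objects, so this functor is exact. Consequently $(\VV^{\otimes n})^{\Sigma_n}$ inherits a finite filtration whose associated graded is isomorphic to $((\VV'\oplus\VV'')^{\otimes n})^{\Sigma_n}$, so that in $K_0(A(pt))$
\begin{equation*}
[\sigma_n(\VV)]=[(\VV^{\otimes n})^{\Sigma_n}]=[((\VV'\oplus\VV'')^{\otimes n})^{\Sigma_n}]=[\sigma_n(\VV'\oplus\VV'')],
\end{equation*}
which is the desired equality. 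The opposite pre-lambda structure $\lambda_t$ is handled identically, using $\PC^{alt}$ in place of $\PC^{sym}$.

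The main obstacle is the construction of the $\Sigma_n$-equivariant filtration and the verification that its associated graded is computed by the Koszul-type formula above; exactness of $\otimes$ in both variables is exactly what makes the graded pieces split as stated, while $\Sigma_n$-equivariance is automatic because $\Sigma_n$ permutes the tensor factors and hence preserves the filtration by total $\VV'$-weight. Exactness of $(-)^{\Sigma_n}$ is then routine in the $\Q$-linear setting.
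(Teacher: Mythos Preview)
Your proposal is correct and follows essentially the same approach as the paper: introduce the two-step filtration $\VV'\subset\VV$, pass to the induced $\Sigma_n$-equivariant filtration on $\VV^{\otimes n}$ (whose associated graded is $(\VV'\oplus\VV'')^{\otimes n}$ by exactness of $\otimes$), and then apply the exact projector $\PC^{sym}$ to conclude the equality in $K_0(A(pt))$. The paper states this more tersely, citing Deligne for the graded computation, while you spell out the filtration by total $\VV'$-weight explicitly.
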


\begin{proof}
For a short exact sequence 
$$0\to\VV'\to \VV \to \VV''\to 0 $$ in $A(pt)$, 
one can introduce an increasing two step filtration on $\VV$:
$$F: F^0\VV:=\VV' \subset F^1\VV=:\VV,\quad \text{with} \quad Gr^F_0\VV\simeq \VV',\:
Gr^F_1\VV\simeq \VV''\:.$$
Then by the exactness of $\otimes$ and of the projector $\PC=(-)^{\Sigma_n}$, one gets an induced filtration $F$
 on $(\VV^{\otimes n})^{\Sigma_n}$ with (compare \cite{De}[Part II, sec.1.1]):
$$Gr^F_*\left((\VV^{\otimes n})^{\Sigma_n}\right) \simeq 
\left((Gr^F_* \VV)^{\otimes n}\right)^{\Sigma_n}\:,$$
so that
$$[\left(\VV^{\otimes n}\right)^{\Sigma_n}] =[Gr^F_*\left((\VV^{\otimes n})^{\Sigma_n}\right)] = [\left((Gr^F_* \VV)^{\otimes n}\right)^{\Sigma_n}] \in K_0(A(pt))\:.$$
\end{proof}

Similarly, for the (symmetric bimonoidal) category $space$, one gets on the Grothendieck group 
$(\bar{K}_0(space),\cup)$
associated to the abelian monoid of isomorphism classes of objects with the sum coming from
the disjoint union $\cup$ (or categorical coproduct) and the product induced by $\times$, the structure of a commutative ring with unit $[pt]$ and zero $[\emptyset]$.
And for our category $space=var/\C$ of complex quasi-projective varieties (or compact topological or complex analytic spaces), we also get a pre-lambda ring structure on $\bar{K}_0(var/\C)$
by the {\em Kapranov zeta function} (\cite{Ka}):
\begin{equation} \label{Ka-zeta}
\sigma_t: \bar{K}_0(var/\C)\to \bar{K}_0(var/\C)[[t]]\:;\:\:
[X] \mapsto 1+ \sum_{n\geq 1}\; [X^{(n)}] \cdot t^n \:,
\end{equation}
because $(X\cup Y)^{(n)} \simeq \bigcup_{i+j=n} (X^{(i)} \times Y^{(j)})$.
In fact, this pre-lambda ring structure factorizes in the context of complex quasi-projective varieties also over the motivic Grothendieck group 
$$K_0(var /\C):= \bar{K}_0(var/\C)/(add) \:,$$
defined as the quotient by the {\em additivity relation}
$[X]=[Z]+ [X \backslash Z]$ for $Z\subset X$ a closed complex subvariety.\\

Finally, we have the following
\begin{example}\label{adams-laurent}
The Laurent polynomial ring $\Z[x_1^{\pm 1},\dots,x_n^{\pm 1}]$ in $n$
variables ($n\geq 0$) becomes a pre-lambda ring (e.g., see \cite{Go} or \cite{HRV}[p.578]) by
\begin{equation}\label{Laurant}
\sigma_t \left(\sum_{\vec{k}\in \Z^n} a_{\vec{k}}\cdot \vec{x}^{\;\vec{k}}\right)
:=\prod_{\vec{k}\in \Z^n} \left(1-\vec{x}^{\;\vec{k}}\cdot t \right)^{-a_{\vec{k}}} \:.
\end{equation}
The corresponding $r$-th Adams operation $\Psi_r$ is given by
(compare e.g. \cite{Go}):
$$\Psi_r(p(x_1,\cdots,x_n))=p(x_1^r,\cdots,x_n^r) \:,$$
e.g., with the notations from the introduction, we have
\begin{equation}\label{adams-e}\begin{split}
\Psi_r(h_{(c)}(X,\MC)(y,x,z))&= h_{(c)}(X,\MC)(y^r,x^r,z^r) \, \\ 
\Psi_r(e_{(c)}(X,\MC)(y,x))&= e_{(c)}(X,\MC)(y^r,x^r) \\ 
\text{and} \quad
\Psi_r(\chi^{(c)}_{-y}(X,\MC))&= \chi^{(c)}_{-y^r}(X,\MC) \:.
\end{split}
\end{equation}
\end{example}

We continue with examples of pre-lambda ring homomorphisms, which are needed for 
translating our abstract generating series formulae of Theorem \ref{adams-thm} into the concrete ones mentioned in the introduction.\\

Since the cohomolgy (with compact support) is {\em additive} under disjoint union, i.e., 
$$H^*_{(c)}(X\cup Y,\Q) = H^*_{(c)}(X,\Q) \oplus H^*_{(c)}(Y,\Q)\:,$$
in the context of complex quasi-projective varieties we get  a group homomorphism
$$h_{(c)}: \bar{K}_0(var/\C) \to \Z[y^{\pm 1},x^{\pm 1},z^{\pm 1}]\:;\:\:
[X]\mapsto \sum_{p,q,k} h_{(c)}^{p,q,k}(X)\cdot y^px^q(-z)^k \:,$$
with $h_{(c)}^{p,q,k}(X):=h^{p,q}(H^k_{(c)}(X,\Q))$.
Then by the usual K\"{u}nneth isomorphism (which respects the underlying mixed Hodge structures of Deligne),
$h_{(c)}$ becomes a ring homomorphism.
So the generating series (\ref{Che-betti}) for $h_{(c)}$ just tells us the well-known fact
(compare \cite{Che,GLM1,Go}) that $h_{(c)}$ is a morphism of pre-lambda rings.
And the corresponding morphism of pre-lambda rings $e_c$ factorizes over the motivic Grothendieck ring:
$$e_c: K_0(var/\C) \to \Z[y^{\pm 1},x^{\pm 1}]\:;\:\:
[X]\mapsto \sum_{p,q} e_{c}^{p,q}(X)\cdot y^px^q \:,$$
because the long exact sequence for the cohomology with compact support
$$\cdots \to H^k_c(X\backslash Z,\Q) \to H^k_c(X,\Q) \to H^k_c(Z,\Q) \to \cdots $$
for $Z\subset X$ a closed subvariety is an exact sequence of mixed Hodge structures.
(For applications of this to the associated {\em power structures} compare with \cite{GLM0,GLM1,Go}.)

We assert that these can be factorized  as homomorphisms of pre-lambda rings:
$$\begin{CD}
h: \bar{K}_0(var/\C) @> k_*(\Q^H_{?}) >> \bar{K}_0(D^b\mh(pt)) @> h >> 
\Z[y^{\pm 1},x^{\pm 1},z^{\pm 1}]
\end{CD}$$
and
$$\begin{CD}
h_c:\bar{K}_0(var/\C) @> k_!(\Q^H_{?}) >> \bar{K}_0(D^b\mh(pt)) @> h >> 
\Z[y^{\pm 1},x^{\pm 1},z^{\pm 1}]\\
@VVV @V can VV @VV z=1 V \\
e_c: K_0(var/\C) @> k_!(\Q^H_{?}) >> K_0(\mh(pt)) @> e >> 
\Z[y^{\pm 1},x^{\pm 1}] \:.
\end{CD}$$
Here $K_0(\mh(pt))$ is the Grothendieck group of the abelian category $\mh(pt)$,
with $can$ induced by the alternating sum of cohomology objects of a complex.
The fact that the group homomorphisms on the left side (compare \cite{BSY}[sec.5]):
$$[X]\mapsto [k_?(\Q^H_X)] \quad \text{ for $?=*,!$} $$ 
are ring homomorphisms of pre-lambda rings follows from (\ref{qp}) and (\ref{key}).\\

To show that  $h:  \bar{K}_0(D^b\mh(pt)) \to \Z[y^{\pm 1},x^{\pm 1},z^{\pm 1}]$ 
is a homomorphism of pre-lambda rings, we further factorize it as
\begin{equation}
\begin{CD}
 \bar{K}_0(D^b\mh(pt)) @> H^* >> \bar{K}_0(Gr^{-}(\mh(pt))) @> \sim >> \bar{K}_0(Gr^{-}(\ms^p)) \\
@V h VV   @VVV @VV {\rm forget} V \\
 \Z[y^{\pm 1},x^{\pm 1},z^{\pm 1}] @<< h < \bar{K}_0(Gr^{-}(Gr^2(\ve_{f}(\C)))) @< Gr_F^*Gr^W_* << \bar{K}_0(Gr^{-}(\ms)) \:,  
  \end{CD}
\end{equation}
where the following notations are used:
\begin{enumerate}
\item[(a)] For an additive (or abelian) tensor category $(A,\otimes)$,  $Gr^{-}(A)$  denotes the additive (or abelian) tensor category of {\em bounded graded}
objects in $A$, i.e., functors $G: \Z\to A$, with $G_n:=G(n)=0$ except for finitely many $n\in \Z$. Here, $$(G\otimes G')_n:= \oplus_{i+j=n} G_i\otimes G_j \:,$$
with the Koszul symmetry isomorphism (indicated by the $-$ sign in $Gr^{-}$): $$(-1)^{i\cdot j} s(G_i,G_j): G_i\otimes G_j \simeq  G_j\otimes G_i \:.$$
\item[(b)] $H^*: D^b\mh(pt)\to Gr^{-}(\mh(pt))$ is the total cohomology functor $\VV\mapsto \oplus_n H^n(\VV)$. Note that this is a functor of additive tensor categories
(i.e., it commutes with direct sums $\oplus$ and tensor products $\otimes$), if we choose the Koszul symmetry isomorphism on $Gr^{-}(\mh(pt))$.
In fact, $D^b\mh(pt)$ is a triangulated category with bounded $t$-structure satisfying \cite{Bi}[Def.4.2], so that the claim follows from \cite{Bi}[thm.4.1, cor.4.4].
\item[(c)] The isomorphism $\mh(pt)\simeq \ms^p$ from Saito's work \cite{Sa, Sa0} was already mentioned in the introduction.
\item[(d)] ${\rm forget}: \ms^p\to \ms$ is the functor of forgetting that the corresponding $\Q$-mixed Hodge structure is graded polarizable.
\item[(e)] $Gr_F^*Gr_*^W: \ms\to  Gr^2(\ve_{f}(\C))$ is the functor of taking the associated bigraded finite dimensional $\C$-vector space
$$\ms \ni \VV \mapsto \oplus_{p,q} \; Gr_F^pGr^W_{p+q}(\VV\otimes_{\Q}\C) \in  Gr^2(\ve_{f}(\C)) \:.$$
This is again a functor of additive tensor categories, if this time we use the induced symmetry isomorphism without any sign changes.
\item[(f)] $h:   Gr^{-}(Gr^2(\ve_{f}(\C)))\to \Z[y^{\pm 1},x^{\pm 1},z^{\pm 1}]$ is given by taking the dimension counting Laurent polynomial
$$\oplus (V^{p,q})^k \mapsto \sum_{p,q,k} \;dim((V^{p,q})^k) \cdot y^px^q(-z)^k \:,$$
with $k$ the degree with respect to the grading in $Gr^{-}$ (this fact corresponds to the sign choice of numbering by $(-z)^k$ in this
definition). 
\end{enumerate}

\begin{remark}\label{rem-kue}
The fact that total cohomology functor $H^*: D^b\mh(pt)\to Gr^{-}(\mh(pt))$ is a tensor functor corresponds to the  K\"{u}nneth formula 
$$H^*(\VV^{\otimes n}) \simeq (H^*(\VV))^{\otimes n} \ , \quad \text{ for $\VV\in D^b\mh(pt)$.}$$
Together with (\ref{key}) this
implies the important K\"{u}nneth isomorphism (\ref{Kue-mHs}) from the introduction. (For a more direct approach compare with \cite{MSS}.)
\end{remark}

Note that all functors in (a)-(e) above are functors of $\Q$-linear tensor categories, so they induce ring homomorphisms of the corresponding Grothendieck groups
$\bar{K}_0(-)$, respecting the corresponding pre-lambda structures (\ref{pre-lambda}). Therefore, 
$h: \bar{K}_0(D^b\mh(pt)) \to \Z[y^{\pm 1},x^{\pm 1},z^{\pm 1}]$ is a homomorphism of pre-lambda rings by the following 

\begin{prop}\label{prop:h}
The ring homomorphism $$h: \bar{K}_0(Gr^{-}(Gr^2(\ve_{f}(\C))))\to \Z[y^{\pm 1},x^{\pm 1},z^{\pm 1}] $$
is a homomorphism of pre-lambda rings.
\end{prop}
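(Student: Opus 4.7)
The plan is to reduce the verification to a check on generators. As an abelian group, $\bar{K}_0(Gr^{-}(Gr^2(\ve_{f}(\C))))$ is freely generated by the classes $[L^{p,q,k}]$ of the one-dimensional $\C$-vector spaces $L^{p,q,k}$ placed in bidegree $(p,q)$ and graded degree $k\in \Z$, since every bounded, bigraded, graded, finite-dimensional $\C$-vector space splits as a direct sum of such lines. Both maps $\sigma_t\circ h$ and $h\circ \sigma_t$ are group homomorphisms from $(\bar{K}_0,+)$ to $(\Z[y^{\pm 1},x^{\pm 1},z^{\pm 1}][[t]]^{\times},\cdot)$ (using that $\sigma_t$ is a pre-lambda structure and $h$ is a ring homomorphism, hence also extends to a group homomorphism on power series units). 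So it is enough to verify $h(\sigma_t([L^{p,q,k}])) = \sigma_t(h([L^{p,q,k}]))$ for each generator.

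The main calculation is then to compute $(L^{\otimes n})^{\Sigma_n}$ in $Gr^{-}(Gr^2(\ve_{f}(\C)))$, where $L=L^{p,q,k}$. Here the Koszul symmetry $(-1)^{ij}s(G_i,G_j)$ on $Gr^{-}$ is essential, since the $\Sigma_n$-action on $L^{\otimes n}$ used to form the symmetric power is induced from this twisted symmetry. Two cases arise. If $k$ is even, every Koszul sign is $+1$, the $\Sigma_n$-action on $L^{\otimes n}$ is trivial, and $(L^{\otimes n})^{\Sigma_n}=L^{\otimes n}$ is one-dimensional in bidegree $(np,nq)$ and graded degree $nk$. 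Summing, $h(\sigma_t([L]))=\sum_{n\geq 0} y^{np}x^{nq}(-z)^{nk}t^n = (1-y^px^qz^k t)^{-1}$, using that $nk$ is even. If $k$ is odd, each transposition acts on $L^{\otimes n}$ by $-1$, so the $\Sigma_n$-action is the sign representation; on a one-dimensional space the sign-invariants vanish for $n\geq 2$ and give $L$ for $n=0,1$. Thus $h(\sigma_t([L]))=1+(-y^px^qz^k)t = 1-y^px^qz^k t$.

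Now compare with the right-hand side. By the defining formula (\ref{Laurant}), for a monomial $\alpha\vec{x}^{\,\vec{j}}$ with integer coefficient $\alpha$ one has $\sigma_t(\alpha\vec{x}^{\,\vec{j}})=(1-\vec{x}^{\,\vec{j}}t)^{-\alpha}$. Applied to $h([L^{p,q,k}])=y^p x^q(-z)^k$: when $k$ is even this is $+y^px^qz^k$, giving $\sigma_t(h([L]))=(1-y^px^qz^k t)^{-1}$; when $k$ is odd this is $-y^px^qz^k$, giving $\sigma_t(h([L]))=1-y^px^qz^k t$. Both cases match the calculation of $h(\sigma_t([L]))$ above, completing the proof.

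The only slightly subtle point will be making sure the Koszul sign convention is applied correctly when passing from the symmetry isomorphism on $Gr^{-}$ to the genuine $\Sigma_n$-action on $L^{\otimes n}$ that defines the projector $\PC^{sym}$; this is where the sign choice $(-z)^k$ in the definition of $h$ is forced, so the bookkeeping of signs between the source and the target of $h$ is the crux of the argument, but it reduces, as above, to checking parity of $k$ on each one-dimensional generator.
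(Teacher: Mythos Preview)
Your proof is correct and follows essentially the same approach as the paper: reduce to the one-dimensional generators $L^{p,q,k}$, split into the cases $k$ even and $k$ odd according to the Koszul sign, compute $(L^{\otimes n})^{\Sigma_n}$ in each case, and match against the pre-lambda structure (\ref{Laurant}) on the Laurent polynomial ring. Your justification of the reduction (via the group-homomorphism property of $\sigma_t$ and $h$) is slightly more explicit than the paper's, but the argument is the same.
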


\begin{proof} Since $h$ is a ring homomorphism, it is enough to check the formula
 $$\sigma_t(h(L)) = 1+ \sum_{n\geq 1} \; h\left( (L^{\otimes n})^{\Sigma_n}\right) \cdot t^n$$
for the set of generators given by the one dimensional graded vector spaces $L=(L^{p,q})^k=:L^{p,q,k}$ sitting in degree $(p,q,k)$. 
Here we have to consider the two cases when $k$ is even and odd, respectively, each giving a different meaning to $((-)^{\otimes n})^{\Sigma_n}$ by the graded commutativity.
For $k$ even,  $((-)^{\otimes n})^{\Sigma_n}$ corresponds to  the $n$-th symmetric power $Sym_n(-)$, whereas for $k$ odd it corresponds to the $n$-th alternating power $Alt_n(-)$.
For $k$ even we get 
$$(L^{\otimes n})^{\Sigma_n}= Sym_n(L)=L^{\otimes n},
\quad \text{with} \quad h(L^{\otimes n})=(y^{p}x^{q}z^{k})^n,$$
thus  
$$1+ \sum_{n\geq 1} \; h\left( (L^{\otimes n})^{\Sigma_n}\right) \cdot t^n= 1+ \sum_{n\geq 1} \;(y^{p}x^{q}z^{k}t)^n = (1-y^{p}x^{q}z^{k}t)^{-1} \:.$$
For $k$ odd we get  
$$(L^{\otimes n})^{\Sigma_n}= Alt_n(L) = \begin{cases}
0 &\text{for $n>1$,}\\
L &\text{for $n=1$.}
                                                                                       \end{cases}$$
Therefore
$$1+ \sum_{n\geq 1} \; h\left( (L^{\otimes n})^{\Sigma_n}\right) \cdot t^n= 1- y^{p}x^{q}z^{k}t \:.$$
Then Proposition \ref{prop:h}  follows, since by (\ref{Laurant}) we have 
$$\sigma_t(1+y^px^qz^k)=(1-y^px^qz^kt)^{-1} \quad \text{and} \quad \sigma_t(1-y^px^qz^k)=1-y^px^qz^kt \:.$$
\end{proof}

By exactly the same method one also gets the following homomorphism of pre-lambda rings:
$$\begin{CD} 
  \bar{K}_0(D^b_c(pt)) @> H^* >>  \bar{K}_0(Gr^{-}(\ve_{f}(\C))) @> P >> \Z[z^{\pm 1}]
  \end{CD}$$
and, resp., 
$$\begin{CD} 
  \bar{K}_0(D^b_{coh}(pt)) @> H^* >>  \bar{K}_0(Gr^{-}(\ve_{f}(\C))) @> P >> \Z[z^{\pm 1}]\:.
  \end{CD}$$
Here $P: Gr^{-}(\ve_{f}(\C))\to \Z[z^{\pm 1}]$ is the Poincar\'e polynomial homomorphism
 given by taking the dimension counting Laurent polynomial
$$\oplus V^k \mapsto \sum_{k} \;dim(V^k) \cdot (-z)^k \:,$$
with $k$ the degree with respect to the grading in $Gr^{-}$.
Also,  $H^*$ is once more a functor of tensor categories by \cite{Bi}[thm.41, cor.4.4].
Similarly for the Euler characteristic homomorphism  $\chi=P(1): Gr^{-}(\ve_{f}(\C))\to \Z$,
where $\Z$ is endowed with the pre-lambda structure $\sigma_t(a)=(1-t)^{-a}$
(and the corresponding Adams operation $\Psi_r(a)=a$).\\ 

Using these homomorphisms of pre-lambda rings, we deduce the concrete generating series formulae from the introduction from our main  Theorem \ref{adams-thm}
in the following way:
\begin{itemize}
 \item Choosing the pseudo-functor $A(-)=D^b\mh(-)$ in Theorem \ref{adams-thm}, we get Theorem \ref{main1} by applying the homomorphism of pre-lambda rings
$h:  \bar{K}_0(D^b\mh(pt))$ $\to \Z[y^{\pm 1},x^{\pm 1},z^{\pm 1}]$ to (\ref{adams1}). Corollary \ref{MHM-alt} 
follows by applying $h$ to (\ref{adams2}).
\item Choosing the pseudo-functor $A(-)=D^b_c(-)$ in Theorem \ref{adams-thm}, we get Theorem \ref{main2}(a) by applying the homomorphism of pre-lambda rings
$\chi:  \bar{K}_0(D^b_c(pt)) \to \Z$ to (\ref{adams1}).
Of course, we get similar results by using the Poincar\'e polynomial homomorphism $P:  \bar{K}_0(D^b_c(pt)) \to \Z[z^{\pm 1}]$,
generalizing Macdonald's formula (\ref{Macd-betti}).
\item Choosing the pseudo-functor $A(-)=D^b_{coh}(-)$ (with respect to proper maps) in Theorem \ref{adams-thm}, we get Theorem \ref{main2}(b) by  applying the homomorphism of pre-lambda rings
$\chi:  \bar{K}_0(D^b_{coh}(pt)) \to \Z$ to (\ref{adams1}).
Again, we get similar results by using the Poincar\'e polynomial homomorphism $P:  \bar{K}_0(D^b_{coh}(pt)) \to \Z[z^{\pm 1}]$ instead.
\end{itemize}

\section{Adams operations for symmetric products}\label{Adams}
A natural way to extend  operations like symmetric or alternating powers from operations on the pre-lambda ring $\bar{K}_0(A(pt))$ (or $K_0(A(pt))$) to a method of introducing 
very important coefficients like $\MC^{(n)},\MC^{\{n\}}$ on the symmetric products $X^{(n)}$, 
follows Atiyah's approach \cite{A} to power operations in $K$-theory.
In particular, one gets coefficients related to the corresponding {\em Adams operations}
$\Psi_r$.

We continue to work on an underlying geometric category $space$ of spaces (with finite products $\times$ and terminal object $pt$), e.g., the category of complex quasi-projective varieties (or compact topological or complex analytic spaces). We also assume that for all $X$ and $n$, the projection morphism
$p_n: X^n\to X^{(n)}=X^n/\Sigma_n$ to the $n$-th symmetric product exists in our category of spaces (e.g., as in the  examples mentioned above).
Let $(-)_*$ be a (covariant) pseudo-functor on this category of spaces, taking values in a
pseudo-abelian $\Q$-linear additive category $A(-)$, and
satisfying the Assumptions \ref{ass4}.\\

Since $\Sigma_n$ acts trivially on the symmetric product $X^{(n)}$, and $A(X^{(n)})$ is a pseudo-abelian $\Q$-linear category,
one has the following decomposition 
(compare \cite{Ge1}[Thm.3.2])
\begin{equation}\label{K-decomp}
\bar{K}_0(A_{\Sigma_n}(X^{(n)})) \simeq \bar{K}_0(A(X^{(n)}))\otimes _{\Z} Rep_{\Q}(\Sigma_n) 
\end{equation}
for the Grothendieck group of the category $A_{\Sigma_n}(X^{(n)})$ of $\Sigma_n$-equivariant objects in $A(X^{(n)})$.
In fact this follows directly from the corresponding decomposition of
$\VV\in A_{\Sigma_n}(X^{(n)})$ by  {\em Schur functors} 
$S_{\lambda}:A_{\Sigma_n}(X^{(n)})\to A(X^{(n)}) $
(\cite{De2,Hl}):
$$\VV \simeq \sum_{\lambda \vdash n} \; V_{\lambda}\otimes S_{\lambda}(\VV)\:,$$
with $V_{\lambda}$ the irreducible $\Q$-representation of $\Sigma_n$ corresponding to the partition $\lambda$ of $n$.

This fact is used in the following definition of an {\em operation}
$$\phi_X: ob(A(X))\to \bar{K}_0(A(X^{(n)}))$$ 
associated to a group homomorphism
$$\phi \in  Rep_{\Q}(\Sigma_n)_*:=hom_{\Z}(Rep_{\Q}(\Sigma_n), \Z)$$ on the rational representation ring of $\Sigma_n$.

\begin{definition}
Let the group homomorphism $\phi \in  Rep_{\Q}(\Sigma_n)_*:=hom_{\Z}(Rep_{\Q}(\Sigma_n), \Z)$ be given.
Then we define the operation $\phi_X: ob(A(X))\to \bar{K}_0(A(X^{(n)}))$ as follows:
\begin{equation} \begin{split} &\begin{CD}
\phi_X: ob(A(X)) @> (-)^{\boxtimes n} >> \bar{K}_0(A_{\Sigma_n}(X^{n})) 
@> p_{n*} >> \bar{K}_0(A_{\Sigma_n}(X^{(n)}))\end{CD}\\
& \begin{CD}
\simeq  \bar{K}_0(A(X^{(n)}))\otimes _{\Z} Rep_{\Q}(\Sigma_n) @> id \otimes \phi >>
\bar{K}_0(A(X^{(n)}))\otimes \Z \simeq \bar{K}_0(A(X^{(n)}))\:.
\end{CD} \end{split}
\end{equation} \end{definition}

If $X$ is a point $pt$, this definition
of the operation $\phi_{pt}$ can be reformulated by Assumption \ref{ass4}(iv) as
$$ \begin{CD}
\phi_{pt}: ob(A(pt)) @> (-)^{\otimes n} >> \bar{K}_0(A_{\Sigma_n}(pt)) 
\simeq  \bar{K}_0(A(pt))\otimes _{\Z} Rep_{\Q}(\Sigma_n) @> id \otimes \phi >>
 \bar{K}_0(A(pt))\:.
\end{CD}
$$

\begin{prop}\label{trace}
For $X=pt$ one gets an induced self-map $$\phi_{pt}: \bar{K}_0(A(pt))\to \bar{K}_0(A({pt}))$$ on the  Grothendieck group $\bar{K}_0(A(pt))$.
\end{prop}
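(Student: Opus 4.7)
The plan is to verify that $\phi_{pt}: M \mapsto (\mathrm{id} \otimes \phi)([M^{\otimes n}])$, originally defined on objects of $A(pt)$, descends to a well-defined self-map of the Grothendieck group $\bar{K}_0(A(pt))$. Isomorphism-invariance is immediate: $(-)^{\otimes n}$ is functorial, the decomposition (\ref{K-decomp}) preserves classes, and $\mathrm{id} \otimes \phi$ is $\Z$-linear, so $\phi_{pt}(M)$ depends only on the isomorphism class of $M$. This already yields a map from the monoid of isomorphism classes of $A(pt)$ (under $\oplus$) into $\bar{K}_0(A(pt))$.

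To promote this to a map on the full group completion, I expand $\phi_{pt}$ via the Schur functor decomposition underlying (\ref{K-decomp}): writing $[M^{\otimes n}] = \sum_{\lambda \vdash n} [S_\lambda(M)] \otimes [V_\lambda]$ in $\bar{K}_0(A(pt)) \otimes Rep_\Q(\Sigma_n)$ gives
\[
\phi_{pt}(M) = \sum_{\lambda \vdash n} \phi([V_\lambda]) \cdot [S_\lambda(M)] \in \bar{K}_0(A(pt)).
\]
Each Schur functor $S_\lambda$ is a polynomial functor of degree $n$, and following Atiyah's approach \cite{A} to power operations in $K$-theory, such polynomial operations on objects extend canonically to set-theoretic self-maps of the commutative ring $\bar{K}_0(A(pt))$ (whose multiplication comes from $\otimes$) via the splitting/Cauchy-formula formalism; equivalently, they are encoded by the pre-lambda ring structure $\sigma_t$ established in Section \ref{sec:pre-lambda}, since the symmetric and alternating powers generate all $S_\lambda$ over $\Q$. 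The finite $\Z$-linear combination $\phi_{pt} = \sum_\lambda \phi([V_\lambda]) \, S_\lambda$ then inherits the extension.

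The main subtlety to address is that $\phi_{pt}$ is typically \emph{not} a group homomorphism: for $\phi$ picking out the coefficient of the trivial representation, $\phi_{pt}(M) = [\mathrm{Sym}^n M]$, and $\mathrm{Sym}^n(M \oplus M')$ decomposes via Littlewood--Richardson as $\bigoplus_{k=0}^n [\mathrm{Sym}^k M] \cdot [\mathrm{Sym}^{n-k} M']$, which is polynomial rather than additive in $[M]$. Hence the content of Proposition \ref{trace} is the well-definedness of this \emph{polynomial} extension in the abstract Karoubian $\Q$-linear tensor setting -- a guarantee that is provided precisely by the pre-lambda structure on $\bar{K}_0(A(pt))$ built in Section \ref{sec:pre-lambda}, together with Atiyah's observation that $Rep_\Q(\Sigma_n)$-weighted power operations are universally determined by the $\sigma_n$ (equivalently by $\sigma_t$).
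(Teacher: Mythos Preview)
Your proposal correctly identifies the issue --- that $\phi_{pt}$ is not additive and so does not automatically descend to the group completion --- but the argument you offer to close that gap is incomplete. You reduce to showing that each Schur operation $M \mapsto [S_\lambda(M)]$ extends to $\bar{K}_0(A(pt))$, and for this you invoke (a) Atiyah's splitting/Cauchy formalism and (b) the fact that $S_\lambda$ is a polynomial in the $\sigma_i$. Neither suffices as written. Atiyah's splitting principle is specific to topological $K$-theory and has no direct analogue in an abstract Karoubian $\Q$-linear tensor category. And while Jacobi--Trudi does express the Schur \emph{symmetric function} as an integer polynomial in the $h_i$, you still owe the verification that the \emph{categorical} class $[S_\lambda(M)]$ agrees in $\bar{K}_0$ with that polynomial evaluated at the categorical $[\sigma_i(M)]$. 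That compatibility is true, but proving it in this setting requires exactly the kind of decomposition of $(\VV\oplus\VV')^{\otimes n}$ that you have not supplied; appealing to the pre-lambda structure from Section~\ref{sec:pre-lambda} only gives you the $\sigma_n$ themselves, not this identification.

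The paper's proof avoids this detour. Using the induction functor $Ind^{\Sigma_n}_{\Sigma_i\times\Sigma_j}$ and the categorical decomposition
\[
(\VV\oplus \VV')^{\otimes n} \;\simeq\; \bigoplus_{i+j=n} Ind^{\Sigma_n}_{\Sigma_i\times\Sigma_j}\bigl(\VV^{\otimes i}\otimes \VV'^{\otimes j}\bigr)
\]
from \cite{De2}, one sees that the \emph{total power map}
\[
\VV \;\longmapsto\; 1+\sum_{n\ge1}[\VV^{\otimes n}]\,t^n
\]
is a semigroup homomorphism from $(ob(A(pt))/\!\simeq,\oplus)$ into the multiplicative \emph{group} of special units $1+\sum_{n\ge1}\bar{K}_0(A(pt))\otimes Rep_\Q(\Sigma_n)\cdot t^n$. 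Because the target is already a group, the universal property of the group completion extends this to a group homomorphism on all of $\bar{K}_0(A(pt))$; projecting onto the $t^n$-coefficient then gives a well-defined $n$-th power map $[\VV]\mapsto[\VV^{\otimes n}]$ on Grothendieck classes, and applying $id\otimes\phi$ finishes. This ``package everything into a power series so the non-additive operation becomes a homomorphism into a group'' is precisely the mechanism your argument is missing.
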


\begin{proof}
Since $A(-)$ takes values in a Karoubian $\Q$-linear category,  it follows that for a space $Z$ with a trivial $G$-action and for subgroups $G'\subset G\subset \Sigma_n$, one has a $\Q$-linear 
{\em induction functor} (compare e.g., \cite{Ge1}) 
\begin{equation}\label{ind}
Ind^{G}_{G'}: A_{G'}(Z)\to A_{G}(Z):\; \VV\mapsto  Ind^{G}_{G'}(\VV):= (\Q[G]\boxtimes \VV)^{G'}\:, 
\end{equation}
so that it agrees on the level of Grothendieck groups with the map
$$Ind^{G}_{G'}:  \bar{K}_0(A_{G'}(Z)) \simeq \bar{K}_0(A(Z))\otimes _{\Z} Rep_{\Q}(G') \to \bar{K}_0(A(Z))\otimes _{\Z} Rep_{\Q}(G) \simeq  
 \bar{K}_0(A_G(Z))$$
coming from the usual induction homomorphism $Ind^{G}_{G'}: Rep_{\Q}(G') \to  Rep_{\Q}(G)$.
Then for $\VV,\VV' \in A(pt)$ one has an isomorphism (\cite{De2}):
$$(\VV\oplus \VV')^{\otimes n} \simeq \oplus_{i+j=n}\; Ind^{\Sigma_n}_{\Sigma_i\times \Sigma_j}\left(\VV^{\otimes i}\otimes \VV'^{\otimes j}\right) \:,$$
so that the total power map
\begin{gather*}
ob(A(pt)) \to 1+\sum_{n\geq 1}\; \bar{K}_0(A(pt))\otimes _{\Z} Rep_{\Q}(\Sigma_n) \cdot t^n \subset \left(\bar{K}_0(A(pt))\otimes _{\Z} Rep_{\Q}(\Sigma)\right)[[t]]\;:\\
\VV\mapsto 1+\sum_{n\geq 1} \; [\VV^{\otimes n}]\cdot t^n
\end{gather*}
induces a morphism of commutative semigroups, from the isomorphism classes of objects in $ ob(A(pt)$ with the direct sum $\oplus$, to the group of special  invertible 
power series in $$1+\sum_{n\geq 1}\; \bar{K}_0(A(pt))\otimes _{\Z} Rep_{\Q}(\Sigma_n) \cdot t^n$$ with the power series multiplication.
Here, 
$$ Rep_{\Q}(\Sigma):=\oplus_{n\geq 0}\;  Rep_{\Q}(\Sigma_n)$$
is the total (commutative) representation ring (compare e.g., \cite{Kn}) with the {\em cross  product}
$$\boxtimes: Rep_{\Q}(\Sigma_i)\times Rep_{\Q}(\Sigma_j) \to Rep_{\Q}(\Sigma_{i+j}):\; (\VV,\VV')\mapsto Ind^{\Sigma_{i+j}}_{\Sigma_i\times \Sigma_j}(\VV\otimes \VV') \:. $$
So the total power map induces a group homomorphism
$$\bar{K}_0(A(pt)) \to 1+\sum_{n\geq 1}\; \bar{K}_0(A(pt))\otimes _{\Z} Rep_{\Q}(\Sigma_n) \cdot t^n:\: [\VV]\mapsto 1+\sum_{n\geq 1} \; [\VV^{\otimes n}]\cdot t^n \:,$$
whose projection onto the summand of $t^n$ gives us the $n$-th power map on the level of Grothendieck groups.

\end{proof}

From the inclusion $\Sigma_i\times \Sigma_j\to \Sigma_{i+j}$ one gets homomorphisms
$$Rep_{\Q}(\Sigma_{i+j}) \to Rep_{\Q}(\Sigma_{i}\times \Sigma_j)  \simeq  Rep_{\Q}(\Sigma_i)\otimes Rep_{\Q}(\Sigma_j)$$
and, by duality,
$$ Rep_{\Q}(\Sigma_i)_*\otimes Rep_{\Q}(\Sigma_j)_* \to Rep_{\Q}(\Sigma_{i+j})_*  \:.$$
Therefore,  
$$Rep_{\Q}(\Sigma)_*:= \oplus_{n\geq 0} \; Rep_{\Q}(\Sigma_n)_*$$
becomes a {\em commutative graded ring} (\cite{A}). Denoting by
$$Op\left(\bar{K}_0(A(pt))\right):=map\left( \bar{K}_0(A(pt)) ,\bar{K}_0(A(pt)) \right)$$
the {\em operation ring} of self-maps of $\bar{K}_0(A(pt))$ (with the pointwise addition and multiplication), the group homomorphisms
 $$Rep_{\Q}(\Sigma_i)_*\to Op\left(\bar{K}_0(A(pt))\right):\; \phi\mapsto \phi_{pt}$$
extend additively to a {\em ring homomorphism} 
\begin{equation}\label{R*}
 Rep_{\Q}(\Sigma)_* \to Op\left(\bar{K}_0(A(pt))\right)\:.
\end{equation}

Note that any element $g\in \Sigma_n$ induces a {\em character}
$\phi:=tr(g):   Rep_{\Q}(\Sigma_n)\to \Z$, and therefore an operation on $\bar{K}_0(A(pt))$
by taking the trace of $g$ in the corresponding representation. The character $tr(g)$ depends of course only on the conjugacy class of
$g\in  \Sigma_n$. The most important operations are the following (compare \cite{A}):
\begin{enumerate}
 \item[$\sigma$:] The homomorphisms $\sigma_n:=\frac{1}{n!} \cdot \sum_{g\in \Sigma_n} tr(g): Rep_{\Q}(\Sigma_n)\to \Z$
corresponds to the $n$-th symmetric power operation $[\VV] \mapsto [(\VV^{\otimes n})^{\Sigma_n}]$.
\item[$\lambda$:] The homomorphisms $\lambda_n:=\frac{1}{n!} \cdot \sum_{g\in \Sigma_n} (-1)^{sign(g)}\cdot tr(g): Rep_{\Q}(\Sigma_n)\to \Z$
corresponds to the $n$-th antisymmetric power operation $[\VV] \mapsto [(\VV^{\otimes n})^{sign-\Sigma_n}]$.
\item[$\Psi$:] If $g\in \Sigma_n$ is an $n$-cycle, then $tr(g)$ corresponds by \cite{A}[cor.1.8] to the $n$-th {\em Adams operation}
$\Psi_n: \bar{K}_0(A(pt))\to \bar{K}_0(A(pt))$ associated to the pre-lambda structure $\sigma_t$ (or $\lambda_t$ depending on the chosen conventions).
\end{enumerate}
Finally, $Rep_{\Q}(\Sigma)_*$ is a polynomial ring on the generators $\{\sigma_n|n\in \N\}$ (or $\{\lambda_n|n\in \N\}$),
and $Rep_{\Q}(\Sigma)_*\otimes_{\Z}\Q$ is a polynomial ring on the generators $\{\Psi_n|n\in \N\}$ (\cite{A}).\\

The important point for us is the fact that we have corresponding ``coefficients'' for $\MC\in A(X)$ on the {\em symmetric products}:
\begin{definition}
The group homomorphisms $\phi\in Rep_{\Q}(\Sigma)_*$ discussed above induce the following ``coefficients''  on the {\em symmetric products} $X^{(n)}$:
\begin{enumerate}
\item[$\sigma$:] $\MC^{(n)}=({p_n}_*\MC^{\boxtimes n})^{\Sigma_n} \in A(X^{(n)}) \:.$
\item[$\lambda$:] $\MC^{\{n\}}=({p_n}_*\MC^{\boxtimes n})^{sign-\Sigma_n} \in A(X^{(n)}) \:.$
\item[$\Psi$:] $\Psi_n(\MC):=\phi_{X}(\MC) \in \bar{K}_0(A(X^{(n)}))$, with $\phi:=tr(g)$ for $g\in \Sigma_n$  an $n$-cycle.
\end{enumerate}
\end{definition}
The (anti-)symmetric powers $\MC^{(n)},\MC^{\{n\}}$ have already appeared before.
The decomposition (\ref{K-decomp}) of the Grothendieck group of $\Sigma_n$-equivariant objects is functorial
under maps between spaces with trivial $\Sigma_n$-action, e.g., the constant map $k: X^{(n)}\to pt$. 
This implies the following equality relating our {\em Adams operation} $\Psi_r: ob(A(X))\to  \bar{K}_0(A(X^{(n)}))$
with the  Adams operation of the pre-lambda ring $\bar{K}_0(A(pt))$.
\begin{prop} With the above definitions and notations, we get
\begin{equation} 
 \Psi_r([k_*\MC]) = k_*(\Psi_r(\MC))
\end{equation}
\end{prop}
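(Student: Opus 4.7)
The plan is to show both sides coincide by reducing them to the same expression in the decomposition $\bar{K}_0(A_{\Sigma_r}(pt)) \simeq \bar{K}_0(A(pt)) \otimes_{\Z} Rep_{\Q}(\Sigma_r)$. For the left-hand side, I would first invoke the Atiyah interpretation of the Adams operation listed just above (case $\Psi$, citing \cite{A}[Cor.~1.8]): the Adams operation $\Psi_r$ associated to the pre-lambda structure $\sigma_t$ on $\bar{K}_0(A(pt))$ equals the operation $\phi_{pt}$ for $\phi = tr(g) \in Rep_{\Q}(\Sigma_r)_*$, with $g \in \Sigma_r$ any $r$-cycle. Unfolding the definition of $\phi_{pt}$ via the reformulation given right after the definition (which uses Assumption \ref{ass4}(iv)), the left-hand side becomes
\[
\Psi_r([k_*\MC]) = (id \otimes \phi)\bigl([(k_*\MC)^{\otimes r}]\bigr),
\]
where $[(k_*\MC)^{\otimes r}]$ sits in $\bar{K}_0(A(pt)) \otimes_{\Z} Rep_{\Q}(\Sigma_r)$ via the $\Sigma_r$-action coming from the symmetric monoidal structure on $A(pt)$.

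For the right-hand side, by definition $\Psi_r(\MC) = (id \otimes \phi)(p_{r*}\MC^{\boxtimes r})$ inside $\bar{K}_0(A(X^{(r)})) \otimes_{\Z} Rep_{\Q}(\Sigma_r)$. The key input is the naturality of the decomposition $\bar{K}_0(A_{\Sigma_r}(-)) \simeq \bar{K}_0(A(-)) \otimes Rep_{\Q}(\Sigma_r)$ under pushforward along maps between spaces with trivial $\Sigma_r$-action, as noted immediately before the proposition. Applied to the constant map $k: X^{(r)} \to pt$, this says $k_*$ acts as $k_* \otimes id$ on the decomposition, hence it commutes with the contraction $id \otimes \phi$. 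Combining this with pseudo-functoriality of $(-)_*$ (since $k \circ p_r$ is again the constant map to $pt$) yields
\[
k_*(\Psi_r(\MC)) = (id \otimes \phi)\bigl(k_* p_{r*}\MC^{\boxtimes r}\bigr) = (id \otimes \phi)\bigl([k_*\MC^{\boxtimes r}]\bigr).
\]
Finally, Assumption \ref{ass4}(iv) provides a $\Sigma_r$-equivariant isomorphism $k_*(\MC^{\boxtimes r}) \simeq (k_*\MC)^{\otimes r}$, so the two expressions agree in $\bar{K}_0(A(pt)) \otimes_{\Z} Rep_{\Q}(\Sigma_r)$, and applying $id \otimes \phi$ gives the claimed equality.

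The step requiring the most care is the naturality assertion used in the second paragraph, namely that pushforward along a morphism $f: Y \to Z$ between spaces with trivial $\Sigma_r$-action respects the Schur-functor decomposition, i.e., $f_* \circ S_{\lambda} \simeq S_{\lambda} \circ f_*$. This ultimately follows because each Schur functor is defined via an idempotent in the group algebra $\Q[\Sigma_r]$ acting on the $r$-fold exterior product, and an additive pseudo-functorial pushforward automatically commutes with images of such projectors in a Karoubian $\Q$-linear additive category. The matching of the two $\Sigma_r$-actions in Assumption \ref{ass4}(iv) -- one coming from the permutation equivariance of the exterior product, the other from the braiding of the symmetric monoidal structure on $A(pt)$ -- is built into the hypothesis and is what makes the final identification possible.
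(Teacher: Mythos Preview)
Your proof is correct and follows exactly the approach the paper indicates: the paper's ``proof'' consists of the single sentence just before the proposition stating that the decomposition $\bar{K}_0(A_{\Sigma_n}(-)) \simeq \bar{K}_0(A(-)) \otimes_{\Z} Rep_{\Q}(\Sigma_n)$ is functorial under maps between spaces with trivial $\Sigma_n$-action, and you have simply unpacked this in detail together with Assumption~\ref{ass4}(iv) and the Atiyah identification of $\Psi_r$ with $tr(g)$ for an $r$-cycle. Your closing paragraph explaining why $f_*$ commutes with the Schur projectors is a useful elaboration of a point the paper leaves implicit.
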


%%%%%%%%%%%%%%%%%%%%%%%%%%%%%%%%%%

\section{Appendix: Exterior products and equivariant objects} \label{appA}
In this Appendix we collect in an abstract form some categorical notions needed to formulate a relation between {\em exterior products} and
{\em equivariant objects}, which suffices to show that our Assumptions \ref{ass4} (i)-(iv) are fullfilled in many situations, e.g.,
for the derived categories $D^b_c(X)$ and $D^b_{coh}(X)$. 
 Here we work over a category $space$ of spaces, which for us are the quasi-projective complex algebraic varieties $X$. But the same arguments also apply to other kinds of categories of ``spaces", e.g.,  (compact) topological or  complex analytic spaces. 

%%%%%%%%%%%%%%%%%%%%%%%%%%%%%%%% 
 
\subsection{Cofibered Categories}
Let $space$ be a (small) category with finite products $\times$ and terminal object
$pt$ (corresponding to the empty product). The universal property of the product $\times$ makes $space$ into a {\em symmetric monoidal category} \cite{Bo}[sec.6.1], i.e., for $X,Y,Z\in ob(space)$ there are functorial isomorphisms
\begin{align}
 a: (X\times Y)\times Z &\stackrel{\sim}{\to} 
X\times (Y\times Z)\:, \tag{associativity}\\ 
l: pt\times X \stackrel{\sim}{\to} X \quad &\text{and} \quad r: X\times pt \stackrel{\sim}{\to} X \:,\tag{units}\\ 
s: X\times Y &\stackrel{\sim}{\to} Y\times X \:, \tag{symmetry}
\end{align}
such that $s^2=id$ and the following diagrams commute:
\begin{equation} \label{pentagon}\xymatrix{
((W\times X)\times Y) \times Z \ar[d]_{a\times id} \ar[r]^a & (W\times X)\times (Y\times Z) \ar[r]^a & W\times ( X\times (Y\times Z)) \\
(W\times (X\times Y))\times Z \ar[rr]_a & &  W\times ((X\times Y) \times Z) \ar[u]_{id\times a}}
\end{equation}
\begin{equation} \label{unit-ass}\xymatrix{
(X\times pt)\times Y \ar[rr]^a \ar[dr]_{r\times id}& & X\times (pt \times Y)\ar[dl]^{id\times l}\\ 
& X\times Y & } 
\end{equation}
\begin{equation} \label{hexagon}\xymatrix{
(X\times Y) \times Z \ar[d]_{s\times id} \ar[r]^a& X\times (Y\times Z) \ar[r]^s & ( Y\times Z)\times X \ar[d]^a\\
(Y\times X)\times Z \ar[r]_a & Y\times (X \times Z) \ar[r]_{id\times s} & Y\times (Z\times X)}
\end{equation}
and
\begin{equation} \label{unit-sym}\xymatrix{
pt\times X \ar[rr]^s \ar[dr]_l & & X\times pt \ar[dl]^r \\
& X &}
\end{equation}
Let us define inductively $X^0:=pt, X^1:=X$ and $X^{n+1}:=X\times X^n$, so that a morphism
$f: X\to Y$ induces $f^n: X^n\to Y^n$. Then the above constraints for $\times$ imply that
the definition of $X^n$ does not depend (up to canonical isomorphisms) on the chosen order of brackets. Moreover,
$X^n$ gets a canonical induced (left) $\Sigma_n$-action such that $f^n$ is an equivariant morphism.\\

In addition, we fix a covariant pseudofunctor $(-)_{*}$ on $space$ (compare e.g., \cite{Vi}[Ch.3] or \cite{LH}[Part II, Ch.1]), i.e.,
a category $A(X)$ for all $X\in ob(space)$ and (push down) functors $f_*: A(X)\to A(Y)$ for all morphisms $f: X\to Y$ in $space$, together with natural isomorphisms
$$e: id_{A(X)} \stackrel{\sim}{\to} id_{X*}$$
and 
$$c: (gf)_* \stackrel{\sim}{\to} g_*f_*$$
for all composable pairs of morphisms $f,g$ in $space$, such that the following conditions are satisfied:
\begin{enumerate}
\item[(PS1)] For any morphism $f: X\to Y$ the map $$f_*(id_{A(X)})= f_* = (f\circ id_{X})_*\stackrel{c}{\to} f_*(id_{X*})$$ agrees with $f_*(e)$.
\item[(PS2)] For any morphism $f: X\to Y$ the map $$id_{A(Y)}(f_*)= f_* = (id_{Y}\circ f)_*\stackrel{c}{\to} id_{Y*}(f_*)$$ agrees with $e(f_*)$.
\item[(PS3)] For any triple of composable  morphisms 
$$
\begin{CD} X @> f >> Y @> g >> Z @> h >> W \:, \end{CD}$$
the following diagram commutes:
$$
\begin{CD} (hgf)_* @> c >> (hg)_*f_*\\
@V c VV @VV c V \\ 
h_*(gf)_* @>> c > h_*g_*f_* \:. \end{CD}$$ 
\end{enumerate}
Note that in many cases (e.g., in all our applications), $e$  is just the identity $id_{A(X)}=id_{X*}$. The above conditions
allow to make the pairs $(X,M)$ with $X\in ob(space)$ and $M\in A(X)$ into a category
$A/space$, where
a morphism $(f,\phi): (X,M)\to (Y,N)$ is given by a morphism $f: X\to Y$ in $space$ together with a morphism $\phi: f_*(M)\to N$ in $A(Y)$. The composition of morphisms
$$\begin{CD} (X,M) @> (f,\phi) >> (Y,N) @> (g,\psi) >> (Z,O) \end{CD}$$
is defined by
\begin{equation} \begin{CD}
(gf)_*M @> c >> g_*f_*M @> g_*(\phi) >> g_*N @> \psi >> O \:. 
\end{CD}\end{equation}
Then the associativity of the composition follows from (PS3) above, whereas $(id,e^{-1})$ becomes the identity arrow by (PS1) and (PS2). The projection $p: A/space \to space$ onto the first component
defines a functor  making $A/space$ into a cofibered (or sometimes also called opfibered)
category over $space$, with $A(X)$ isomorphic to the fiber over $X$.
Here $(id_X,\tilde{\phi}): (X,M)\to (X,N)$ corresponds to $\tilde{\phi}:={\phi}\circ e:
M\to id_{X*}(M)\to N$. In particular, any morphism $(f,\phi): (X,M)\to (Y,N)$ in $A/space$ can be decomposed as $(f,\phi)= (id_Y, \tilde{\phi})\circ (f,id_{f_*M})$.\\

Sometimes it is more natural to work with the opposite arrows in $A(X)$, and to think of $(-)_*$ as a pseudofunctor with ``values" in the opposite category
$A^{op}(-)$. Then the pairs $(X,M)$ with $X\in ob(space)$ and $M\in A(X)$ become a category
$A^{op}/space$, were
a morphism $(f,\phi): (X,M)\to (Y,N)$ is given by a morphism $f: X\to Y$ in $space$ together with a morphism $\phi: N\to f_*(M)$ in $A(Y)$. The composition of morphisms
$$\begin{CD} (X,M) @> (f,\phi) >> (Y,N) @> (g,\psi) >> (Z,O) \end{CD}$$
is then defined by
\begin{equation} \begin{CD}
O  @> \psi >> g_*N @>g_*(\phi)  >> g_*f_*M @> c^{-1} >> (gf)_*M\:, 
\end{CD}\end{equation}
with $(id,e)$ the identity arrow.

%%%%%%%%%%%%%%%%%%%%%%%%%%%%%%%

\subsection{Equivariant objects}
Suppose that the  group $G$, with unit $1$, acts (from the left) on $X$, i.e., we have a group homomorphism $\phi: G\to Aut_{space}(X)$. Then a $G$-equivariant object $M\in A(X)$
is by definition given by  a family of isomorphisms 
$$\tilde{\phi}_g: g_*M\to M \quad (g\in G)$$ such that
$$\tilde{\phi}_1=e^{-1} \quad \text{and}  \quad \tilde{\phi}_{gf}= \tilde{\phi}_g\circ g_*(\tilde{\phi}_f) \circ c \quad \text{for all} \quad f,g\in G \:.$$
This just means that $(X,M) \in ob(A/space)$ has a $G$-action given by a group homomorphism $\tilde{\phi}: G\to Aut_{A/space}((X,M))$. With the obvious morphisms, this defines the category $A_G(X)$ of $G$-equivariant objects in $A(X)$. 
If $G$ acts trivially on $X$,
i.e., $g=id_X$ for all $g\in G$, then this corresponds to an action of $G$ on $M$ in $A(X)$, i.e., a group homomorphism $\phi: G\to Aut_{A(X)}(M)$.
For a $G$-equivariant morphism $f: X\to Y$ of $G$-spaces one gets an induced functorial push down $f_*: A_G(X)\to A_G(Y)$, defining a covariant pseudofunctor on the category $G-space$. If we prefer to work with $A^{op}$, we can use the isomorphisms 
$$\tilde{\psi}_g:=\tilde{\phi}_g^{-1}: M\to g_*M \quad (g\in G)$$ such that
$$\tilde{\psi}_1=e \quad \text{and}  \quad \tilde{\psi}_{gf}= c^{-1} \circ g_*(\tilde{\psi}_f) \circ \tilde{\psi}_g\quad \text{for all} \quad f,g\in G \:.$$
Then a $G$-equivariant object $M\in A_G(X)$ corresponds to a $G$-action on   $(X,M)$ in $ob(A^{op}/space)$  given by a group homomorphism $\tilde{\psi}: G\to Aut_{A^{op}/space}((X,M))$.
If $G$ acts trivially on $X$,
then this corresponds to an action of $G$ on $M$ in $A(X)$, i.e., a group homomorphism $\psi: G\to Aut_{A(X)}(M)$ given by isomorphisms
$$\psi_g:=\phi_g^{-1}: M\to M \quad (g\in G) \:$$
And this is the version needed in this paper for $\Sigma_n$-equivariant objects on symmetric products $X^{(n)}$.\\

In our applications, we consider a pseudofunctor $(-)_*$ taking values in a {\em ($\Q$-linear) additive category} $A(-)$,
so that one can look at the corresponding Grothendieck groups $\bar{K}_0(X)$. 
Then $A(X)$ is a ($\Q$-linear) additive category for all
$X\in ob(space)$, with $f_*$ ($\Q$-linear) additive for all morphisms $f$.
 So these induce also homomorphisms of the corresponding 
Grothendieck groups $\bar{K}_0(-)$. If a group $G$ acts on $X$, then the category of $G$-equivariant objects $A_G(X)$ also becomes a 
($\Q$-linear) additive category.\\

Assume now that $A/space$ (or $A^{op}/space$) has the structure of a {\em symmetric monoidal category}, such that
the projection $p: A/space \to space$ (or $p: A^{op}/space \to space$) onto the first component is a strict monoidal functor.
So we have a functorial ``product" (or ``pairing") 
\begin{equation}
(X,M) \boxtimes (Y,N) = (X\times Y, M\boxtimes N), 
\end{equation}
together with associativity, unit and symmetry isomorphisms $a,l,r,s$ in $A/space$ (or $A^{op}/space$) satisfying $s^2=id$, (\ref{pentagon}), (\ref{unit-ass}), (\ref{hexagon}) and (\ref{unit-sym}). This suffices to get the following properties of the Assumptions \ref{ass4}(ii)-(iv) (for the last property compare also with the next Section):
\begin{enumerate}
 \item[(ii')] For $M\in A(X)$ one gets the $\Sigma_n$-equivariant object $M^{\boxtimes n}\in A(X^n)$  corresponding to the $\Sigma_n$-equivariant object $(X,M)^n$ in $A/space$ (or in $A^{op}/space$).
\item[(iii')] If we identify $pt^n$ with $pt$ by the natural projection isomorphism $k: pt^n\stackrel {\sim}{\to} pt$, then $A(pt)$ becomes a symmetric monoidal category
with unit $1_{pt}\in A(pt)$ and product $M\otimes M':=k_*(M\boxtimes M')$.
  \item[(iv')] By the functorality of $\boxtimes$, there is a natural K\"{u}nneth morphism $k_*(M^{\boxtimes n})\to (k_*M)^{\otimes n} \in A_{\Sigma_n}(pt)$
  (or $(k_*M)^{\otimes n}\to  k_*(M^{\boxtimes n})\in A_{\Sigma_n}(pt)$).
\end{enumerate}

If, moreover, the  pseudo-functor $(-)_*$  takes values in a
pseudo-abelian $\Q$-linear additive category $A(-)$, with the tensor product $\otimes$ on $A(pt)$ being 
$\Q$-linear additive in both variables, then all properties of Assumptions \ref{ass4} are fullfilled, if the corresponding K\"{u}nneth morphism (iv') for a constant map is always 
an isomorphism
(as in all our examples, where even  $\boxtimes$ is $\Q$-linear additive in both variables.)

%%%%%%%%%%%%%%%%%%%%%%%%%%%%%%

\subsection{K\"{u}nneth morphisms}
If one looks at the morphisms $(f,id_{f_*M}): (X,M)\to (Y,f_*M)$ and $(f',id_{f'_*M'}): (X',M')\to (Y',f'_*M')$ in $A/space$ (or $A^{op}/space$),
then the functoriality of $\boxtimes$ yields by
\begin{equation}\label{Kue-func}
 (f,id_{f_*M})\boxtimes (f',id_{f'_*M'}) =:(f\times f', \Ku)
\end{equation}
 a K\"{u}nneth morphism in $A(Y\times Y')$, namely:
$$\Ku: (f\times f')_*(M\boxtimes M')\to f_*M\boxtimes f'_*M' \quad \text{or} \quad
\Ku: f_*M\boxtimes f'_*M' \to (f\times f')_*(M\boxtimes M') \:,$$
which is functorial in $M\in ob(A(X))$ and $M'\in ob(A(X'))$.  In many examples, $\Ku$ is an isomorphism, and one can easily switch between the two viewpoints.
In all our applications, one has a natural K\"{u}nneth morphism
\begin{equation}\label{Kue}
 \Ku: f_*M\boxtimes f'_*M' \to (f\times f')_*(M\boxtimes M'),
\end{equation}
so that we have to work with a pseudofunctor with ``values" in the opposite category
$A^{op}(-)$. We only spell out here where such a {\em symmetric monoidal structure} on $A^{op}/space$ (over $(space,\times)$) really comes from in this case. First of all, the ``pairing" (or bifunctor)
$$\boxtimes: A^{op}/space \times A^{op}/space \to A^{op}/space$$
corresponds to 
\begin{itemize}
\item[(p1)] An exterior product $\boxtimes: A(X)\times A(X') \to A(X\times X'); \: (M,M')\mapsto M\boxtimes M'$, bifunctorial in $M,M'$.
\item[(p2)] A covariant pseudofunctor $(-)_*$ with ``values" in the opposite category
$A^{op}(-)$.
\item[(p3)] For all morphisms $f: X\to Y$ and $f': X'\to Y'$ in $space$, a  K\"{u}nneth morphism in $A(Y\times Y')$:
$$\Ku: f_*M\boxtimes f'_*M' \to (f\times f')_*(M\boxtimes M')\:,$$ 
functorial in $M\in ob(A(X))$ and $M'\in ob(A(X'))$. 
\item[(p4)]  For $f,f',M,M'$ as in (p3), and for morphisms $g: Y\to Z,\: g': Y'\to Z'$ in $space$, these satisfy the compability
\end{itemize}
$$\xymatrix{
 (gf)_*M\boxtimes (g'f')_*M' \ar[d]_{c\boxtimes c} \ar[rr]^{\Small{\Ku}}  &  & (gf\times g'f')_*(M\boxtimes M') \ar[d]^{c}\\ 
g_*f_*M \boxtimes g'_*f'_*M' \ar[r]_-{\Small{\Ku}} & (g\times g')_*(f_*M\boxtimes f'_*M') \ar[r]_-{\Small{\Ku}} &
(g\times g')_*(f\times f')_*(M\boxtimes M')\:.} $$

Then the associativity, unit and symmetry isomorphisms $a,l,r,s$ in $A^{op}/space$ are given by isomorphisms
functorial in $M\in A(X), M'\in A(Y)$ and $M''\in A(Z)$:
\begin{align}
M\boxtimes (M' \boxtimes M'')  &\stackrel{\sim}{\to} 
a_*((M\boxtimes M')\boxtimes M'')\:,\\ 
M \stackrel{\sim}{\to} l_*(1_{pt}\boxtimes M) \quad &\text{and} \quad M \stackrel{\sim}{\to} r_*(M\boxtimes 1_{pt}) \:,\\ 
M\boxtimes M' &\stackrel{\sim}{\to} s_*(M'\boxtimes M) \:. 
\end{align}
And these have to be compatible with the  K\"{u}nneth morphism (\ref{Kue}) according to the functoriality of $\boxtimes$ given by (\ref{Kue-func}).
Note that for the definition of $\boxtimes$, we only have to work over the symmetric monoidal subcategory $space^{iso}$ of $space$,
with the same objects and only all isomorphisms as morphisms. In this case one can easily switch between covariant and contravariant pseudofunctors
using $f^*:=(f^{-1})_*$. 

%%%%%%%%%%%%%%%%%%%%%%%%%%%%%%

\subsection{Examples}
In most applications, the exterior product $\boxtimes$ comes from an interior product $\otimes$ on $A(X)$ making it into a  symmetric monoidal 
category with unit $1_X\in A(X)$, together with a contravariant pseudofunctor $f^*$ on $space$ compatible with the symmetric monoidal structure (e.g., $f^*1_Y\simeq 1_X$ 
and $f^*(- \times -) \simeq f^*(-) \times f^*(-)$ for a morphism $f: X\to Y$ in $space$). Indeed, using the projections 
$$\begin{CD} X @< p << X\times X' @> q >> X' \end{CD}$$
one defines for $M\in A(X)$ and $M'\in A(X')$ the exterior product by $$M\boxtimes M':= p^*M\otimes q^*M'\:.$$ Then one only needs in addition a K\"{u}nneth morphism (p3) for the
covariant pseudofunctor $(-)_*$ satisfying the compability (p4). Here are different examples showing how to get such a structure:
 \begin{example}[adjoint pair]  The functors $f_*$ are right adjoint to $f^*$ so that the pseudo\-functors $(-)^*,(-)_*$ form an adjoint pair as in \cite{LH}[Part I, sec.3.6].
Here the K\"{u}nneth morphism $\Ku$ for the cartesian diagram
\begin{equation}
\label{adj}
\begin{CD} X @< p << X\times X' @> q >> X' \\
   @V f VV @VV f\times f' V @VV f' V\\
Y @<< p' < Y\times Y' @>> q' > Y' \:,
  \end{CD}
\end{equation}
with $M\in A(X)$ and $M'\in A(X')$ is induced by adjunction from the morphism
\begin{align*}
 (f\times f')^*(f_*M\boxtimes f'_*M') & = (f\times f')^*(p'^*f_*M\otimes q'^*f'_*M') \\
& \simeq ((f\times f')^*p'^*f_*M) \otimes ((f\times f')^*q'^*f'_*M')\\
&\simeq (p^*f^*f_*M) \otimes (q^*f'^*f_*M) \\
&\stackrel{adj}{\longrightarrow} p^*M\otimes q^*M' = M\boxtimes M' \:.
\end{align*}
\end{example}
A typical example is given by $A(X):=D(X)$, the derived category of sheaves of $\OO_X$-modules, for $X$ a commutative ringed space, with $f^*=Lf^*$ and $f_*=Rf_*$ the derived inverse and direct image (\cite{LH}[Part I, (3.6.10) on p.124]). Or we can work with suitable subcategories, e.g., the subcategory $D_{qc}(X)$ of complexes with quasi-coherent cohomology
in the context of $X$ a separated scheme. Here are some important examples of such ``adjoint pairs":
\begin{itemize}
 \item[(coh*)] We work with the category $space$ of separated schemes of finite type over a base field $k$, with $A'(X):=D_{qc}(X)$ and $f^*:=Lf^*, f_*:=Rf_*$ as before, see \cite{LH}.
Note that the subcategory $A(X):=D^b_{coh}(X)$ of bounded complexes with coherent cohomology is in general not stable under $f^*$ or $\otimes$.
But it is stable under $f_*$ for proper morphisms, and under the exterior products $\boxtimes$, defining therefore a symmetric monoidal structure on $(D^b_{coh})^{op}/space$
with $space:=sch_k^{cp}$ the category of separated complete schemes of finite type over a base field $k$.
\item[(c*)] We work with the derived category $A(X):=D^b_c(X)$ of bounded complexes (of sheaves of vector spaces)  with constructible cohomology in the complex algebraic (or analytic)
context, with $space$ the category of complex (quasi-projective) varieties (or the category of compact complex analytic spaces) \cite{Sch}. Here we  use $\otimes$ and $f^*:=Lf^*, f_*:=Rf_*$ to get a symmetric monoidal structure on $(D^b_c)^{op}/space$.\\
The same arguments also work for constructible sheaf complexes in the context of real geometry for semialgebraic or compact subanalytic sets, and for stratified maps between suitable compact stratified sets \cite{Sch}.
\end{itemize}  
Also note that in all these  examples the K\"{u}nneth morphism $\Ku$ is an isomorphism
(compare, e.g., with \cite{Bon}[Thm.2.1.2] for the case (coh*), using the fact that a projection $X\times X'\to X$ is a flat morphism,
and \cite{Sch}[Sec.1.4, Cor.2.0.4] or \cite{MSS}[Sec.3.8] for the case (c*)). 

\begin{example}[base change $+$ projection morphism] Assume that  the pseudofunctor $f_*:=f_!$ is endowed with a natural {\em projection morphism} 
\begin{equation}
(f_!M)\otimes M' \to  f_!(M\otimes f^*M')
\end{equation}
and a natural   {\em base change morphism}
\begin{equation}
 p'^*f_! M\to (f\times id_{X'})_!p^*M  
\end{equation}
for $f,p,p'$ as in the cartesion diagram (\ref{adj}) (with $X'=Y'$), satisfying suitable compatibilities. 
Then one gets the K\"{u}nneth morphism $\Ku$ for $f\times id_{X'}$ by
\begin{align*}
 (f_!M)\boxtimes M' &= (p'^*f_!M)\otimes q'^*M'\\
& \to ((f\times id_{X'})_!p^*M)\otimes   q'^*M'\\
&\to (f\times id_{X'})_!( p^*M\otimes ((f\times id_{X'})^*q'^*M')\\
&\simeq (f\times id_{X'})_!( p^*M\otimes q^*M') \\
&= (f\times id_{X'})_!(M\boxtimes M') \:.
\end{align*}
In the same way, one gets the K\"{u}nneth morphism for $id_X\times f'$ by using the symmetry isomorphism $s$, and the K\"{u}nneth morphism for
$(f\times f')= (f\times id_{X'})\circ (id_X\times f')$ follows by composition.
\end{example}

Here is an important example of such a ``pair" $(-)^*,(-)_!$ with projection and base change morphisms:
\begin{itemize}
\item[(c!)] We work with the derived category $A(X):=D^b_c(X)$ of bounded complexes (of sheaves of vector spaces)  with constructible cohomology in the complex algebraic 
context, with $space$ the category of complex (quasi-projective) varieties \cite{Sch}. Here we use $\otimes, f^*:=Lf^*$ and  the derived direct image functor with proper support $f_!:=Rf_!$ to get in this way a symmetric monoidal structure on $(D^b_c)^{op}/space$.\\
The same arguments also work for constructible sheaf complexes in the context of real geometry for semialgebraic sets \cite{Sch}.
\end{itemize}  
Also in this example, the projection and base change morphisms, and therefore also the 
K\"{u}nneth morphisms $\Ku$, are isomorphisms (compare, e.g., with
\cite{Sch}[Sec.1.4]).\\

Finally, note that there are also many interesting cases where one doesn't have such 
a ``tensor structure"
on $A(X)$ for a singular space $X$, e.g., for perverse sheaves or coherent sheaves. But nevertheless $A^{op}/space$ can be endowed with a symmetric monoidal structure as above so that our techniques and results apply.

\begin{ack} The authors thank Morihiko Saito, Sylvain Cappell and Shoji Yokura for discussions on the subject of this paper.
The authors acknowledge the support of New York University, where this work was started. The first author also acknowledges support from the Max-Planck-Institut f\"ur Mathematik, Bonn, where part of this project was written.
\end{ack}

\end{document}